\newtcolorbox{activitybox}[1][]{%
    breakable,
    enhanced,
    colback=gray!20,
    colframe=gray!30,
    coltitle=black,
    #1
}
\definecolor{antiquefuchsia}{rgb}{0.57, 0.36, 0.51}
\definecolor{azure}{rgb}{0.0, 0.5, 1.0}
\theoremstyle{plain}
\newtheorem{theorem}{Theorem}
\newtheorem{proposition}[theorem]{Proposition}
\newtheorem{lemma}[theorem]{Lemma}
\theoremstyle{definition}
\theoremstyle{remark}
\newtheorem{remark}{Remark}
\newcommand{\R}{\mathbb{R}} 
\newcommand{\C}{\mathbb{C}}
\newcommand{\DD}{\mathbb{D}} 
\newcommand{\D}{\mathscr{D}}
\newcommand{\te}{\tilde{\tau}_{\lambda}}
\newcommand{\Alpha}{\boldsymbol{\alpha}}
\DeclareMathOperator{\Diff}{Diff}
\newcommand{\Id}{\mathrm{Id}}
\renewcommand{\Re}{\operatorname{Re}}
\renewcommand{\Im}{\operatorname{Im}}
\title{Biholomorphism rigidity for transport twistor spaces}
\author[J. Bohr]{Jan Bohr}
\address{ 
Mathematical Institute of the University of Bonn,
Endenicher Allee 60, 53115 Bonn, Germany}
\email {bohr@math.uni-bonn.de}
\author[F. Monard]{Fran\c{c}ois Monard}
\address{Department of Mathematics, University of California, Santa Cruz, CA 95064, USA}
\email{fmonard@ucsc.edu}
\author[G.P. Paternain]{Gabriel P. Paternain}
\address{ Department of Mathematics, University of Washington, Seattle, WA 98195, USA}
\email {gpp24@uw.edu}
\date{\today}
\begin{document}

\maketitle

\begin{abstract} We prove that biholomorphisms between the transport twistor spaces of simple or Anosov surfaces exhibit rigidity: they must be, up to constant rescaling and the antipodal map, the lift of an orientation preserving isometry.
\end{abstract}

\section{Introduction}
Transport twistor spaces are degenerate complex 2-dimensional manifolds, which can be associated with any oriented Riemannian surface \((M,g)\). The complex geometry of these spaces is intricately linked to the geodesic flow of the surface.

For an oriented Riemannian surface \((M,g)\), possibly with a non-empty boundary $\partial M$, the transport twistor space is defined as the 4-manifold
\[
Z = \{(x,v) \in TM : g(v,v) \leq 1\},
\]
endowed with a natural complex structure. This structure turns the interior \( Z^\circ \) into a classical complex surface, though it degenerates at the unit circle bundle \( SM \subset \partial Z \), encoding the geodesic vector field in the process.

In recent years, these twistor spaces have emerged as valuable tools for organising and reinterpreting various questions in geometric inverse problems and dynamical systems \cite{BoPa23, BLP23, BMP24}.  

This note examines biholomorphisms between such transport twistor spaces, establishing rigidity results in two distinct contexts: \textit{simple} surfaces and \textit{Ano\-sov} surfaces. A simple surface is a compact Riemannian surface \((M,g)\) with a strictly convex boundary, no trapped geodesics, and no conjugate points. An Anosov surface is a closed, oriented Riemannian surface \((M,g)\) whose geodesic flow is Anosov—equivalently, \( g \) belongs to the \( C^2 \)-interior of the set of metrics without conjugate points.

Consider two oriented Riemannian surfaces \((M_i, g_i)\) (for \( i=1,2 \)) and their associated twistor spaces \( Z_1 \) and \( Z_2 \). A map \(\Phi\colon Z_1 \rightarrow Z_2\) is called a biholomorphism if it is a diffeomorphism that restricts to a biholomorphism between the complex surfaces \( Z^\circ_1 \) and \( Z^\circ_2 \). 
If two twistor spaces are biholomorphic, the underlying real surfaces must be diffeomorphic. Therefore, without loss of generality, we may fix the surface \( M \) and consider different Riemannian metrics on it. Biholomorphisms from $Z$ to itself are called {\it automorphisms} and the group of automorphisms is denoted $\mathrm{Aut}(Z)$.

Obvious examples of biholomorphisms arise from the lifting of isometries: if \(\varphi\colon (M, g_1) \to (M, g_2)\) is an orientation preserving isometry, then the \textit{lift}
\[
\varphi_\sharp\colon Z_1 \to Z_2, \quad \varphi_\sharp(x, v) = (\varphi(x), d\varphi_x(v)),
\]
is a biholomorphism. Moreover, the geodesic flow being reversible implies that the antipodal map \( a(x, v) = (x, -v) \) is an automorphism. Constant rescalings also produce biholomorphisms: if \( g_2 = Cg_1 \) for a constant \( C > 0 \), then also the map $\mathrm{sc}(x, v) = (x, C^{-1/2}v)$ is a biholomorphism $\mathrm{sc}\colon Z_1\to Z_2$.


A simple yet illustrative example of a biholomorphism that is {\it not} of the aforementioned types arises for \( M = \mathbb{R}^2 \) with its standard Euclidean metric. Here, \( Z \) can be identified with \( \mathbb{C} \times \mathbb{D} \), where \( \mathbb{D} \) is the closed unit disc. Using standard coordinates \((z, \mu) \in \mathbb{C} \times \mathbb{D}\), the holomorphic structure is described in terms of \( T^{0,1}Z =\C(
\partial_{\bar{z}} + \mu^2 \partial_z )
\oplus \C  \partial_{\bar{\mu}}.
$
One can verify that the map
\[
(z, \mu) \mapsto \left(z + \frac{is\mu}{1 + |\mu|^2}, \mu\right)
\]
is an automorphism  of $Z$ for any \( s \in \mathbb{R} \). However, for $s\neq 0$ this cannot be expressed in terms of the antipodal map and lifts of isometries.
 Note that this automorphism commutes with the lifts of translations, allowing it to descend to an automorphism of the transport twistor space of the 2-torus.
 
  Our primary results demonstrate that biholomorphism rigidity holds for both simple and Anosov surfaces:

\begin{theorem}\label{mainthm}
Let \( g_1 \) and \( g_2 \) be two simple metrics on $M$ with \( \mathrm{Vol}(\partial M, g_1) = \mathrm{Vol}(\partial M, g_2) \). If \( \Phi\colon  Z_1 \to Z_2 \) is a biholomorphism, then there exists an orientation preserving isometry \( \varphi\colon  (M, g_1) \to (M, g_2) \) such that:
\[
\Phi = \varphi_\sharp \quad \text{or} \quad \Phi = \varphi_\sharp \circ a.
\]
\end{theorem}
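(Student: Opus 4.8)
The plan is to push the biholomorphism down onto the degenerate locus $SM$, use the complex geometry there to recover the geodesic dynamics, invoke boundary rigidity for simple surfaces to produce the isometry, and then rigidify what is left by a unique–continuation argument at $SM$. \emph{Step 1 (restriction to $SM$).} Since $Z_i^\circ$ is intrinsically the region where the complex structure is non-degenerate and $\Phi$ is a diffeomorphism $Z_1\to Z_2$ restricting to a biholomorphism of the interiors, $\Phi$ carries $SM_1=Z_1\setminus Z_1^\circ$ onto $SM_2$ and restricts to a diffeomorphism $\phi:=\Phi|_{SM_1}\colon SM_1\to SM_2$. As $\Phi$ also preserves the inward boundary face $\{x\in\partial M\}$ and hence the corner $S(\partial M)$ where the two faces meet, $\phi$ restricts to a diffeomorphism of these corner sets as well.

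\emph{Step 2 (recovering the dynamics).} The degeneration of the complex structure along $SM$ encodes the geodesic vector field — it is the characteristic direction of the degenerating $\bar\partial$–operator along $SM$ — so the geodesic foliation of $SM_i$ is a biholomorphic invariant. Hence $\phi$ maps geodesic orbits of $(M,g_1)$ to geodesic orbits of $(M,g_2)$, and, together with Step 1, it intertwines the scattering relations of $g_1$ and $g_2$ relative to the boundary diffeomorphism $\varphi_0$ that $\phi$ induces on $\partial M$, up to precomposition with the antipodal map $a$ according to whether $\phi$ respects or exchanges inward and outward boundary vectors. The hypothesis $\mathrm{Vol}(\partial M,g_1)=\mathrm{Vol}(\partial M,g_2)$ is precisely what removes the overall arc–length rescaling, i.e.\ the constant–rescaling automorphisms $\mathrm{sc}$.

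\emph{Steps 3--4 (boundary rigidity, then rigidity at $SM$).} Two simple surfaces with conjugate scattering relations — equivalently, with the same boundary distance function — are isometric via an orientation–preserving isometry; this is two–dimensional boundary rigidity (Pestov--Uhlmann). Applying it to $\varphi_0$ yields an orientation–preserving isometry $\varphi\colon(M,g_1)\to(M,g_2)$ whose lift $\varphi_\sharp$ agrees with $\phi$ on $SM_1$, after possibly composing with $a$. Replacing $\Phi$ by $\varphi_\sharp^{-1}\circ\Phi$ or by $\varphi_\sharp^{-1}\circ\Phi\circ a$, it remains to show that any $\Psi\in\mathrm{Aut}(Z_1)$ with $\Psi|_{SM_1}=\mathrm{Id}$ is the identity. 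For this one expands $\Psi$ in a collar of $SM_1$ in the boundary defining function $\rho=1-g(v,v)$: the conditions that $\Psi$ fix $SM_1$ and be holomorphic on $Z_1^\circ$ force $\Psi-\mathrm{Id}$ to vanish to infinite order along $SM_1$, and a holomorphic unique–continuation argument on $Z_1^\circ$ then gives $\Psi=\mathrm{Id}$.

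\emph{Main obstacle.} The crux is Step 2 coupled with Step 4. In Step 2 one must extract from the degenerate complex structure not merely the geodesic foliation but enough of the scattering data — with the correct normalization — to feed into boundary rigidity, and carefully track the inward/outward bookkeeping that produces the dichotomy $\Phi=\varphi_\sharp$ versus $\Phi=\varphi_\sharp\circ a$. Step 4 is the infinitesimal counterpart and is delicate precisely because the complex structure fails to be elliptic exactly along $SM$: the usual elliptic unique–continuation machinery must be replaced by an analysis of the transport operator transverse to $SM$, which is where essentially all the twistor–specific work will be concentrated.
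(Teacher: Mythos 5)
Your outline founders at the passage from Step 2 to Step 3. Knowing that $\phi=\Phi|_{SM_1}$ maps geodesic orbits to geodesic orbits and that $\phi|_{\partial SM}$ conjugates the scattering relations $\alpha_1$ and $\alpha_2$ is essentially vacuous information: \emph{any} two simple metrics (normalised to agree on $TM|_{\partial M}$) are related by a smooth orbit equivalence, and the boundary restriction of any such orbit equivalence conjugates the scattering relations — the paper makes exactly this point in the introduction. Conjugacy of scattering relations by an arbitrary diffeomorphism of $\partial SM$ (even one covering a diffeomorphism $\varphi_0$ of $\partial M$) does not imply equality of boundary distance functions, so Pestov--Uhlmann boundary/scattering rigidity cannot be invoked as you do. The actual proof must exploit the additional complex-geometric constraint on $\phi$: it preserves the vertical fibres and pulls back fibrewise holomorphic, invariant functions to fibrewise holomorphic ones. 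In the paper this is combined with surjectivity of Pestov--Uhlmann extension for holomorphic functions and holomorphic $1$-forms, a Hardy-space density argument, and the Rad\'o--Kneser--Choquet theorem to force the fibre maps over glancing points to be the identity (after possibly composing with $a$), and even then one only obtains a \emph{conformal} diffeomorphism $\varphi$ with $\varphi^*g_2=e^{2\sigma}g_1$; killing $\sigma$ requires a separate boundary determination of $d\sigma|_{\partial M}$ (via a thermostat scattering-relation computation), the zero-average consequence of the equal boundary lengths, and conformal scattering rigidity from \cite{PSU23}. None of this machinery, nor any substitute for it, appears in your proposal; the boundary-volume hypothesis does not by itself "remove the rescaling" in the way you suggest.

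There is a second, independent gap between Steps 3 and 4. Even granting an isometry $\varphi$, its lift can only be expected to agree with $\phi$ on $\partial SM_1$; on the interior of $SM_1$ the two maps may differ by a time change $\varphi_\tau$ along geodesic orbits, since orbit equivalence determines $\phi$ only up to reparametrisation. Your Step 4 assumes $\Psi|_{SM_1}=\mathrm{Id}$, which your reduction does not deliver. Closing this gap is genuinely where the simplicity (no conjugate points) hypothesis enters again: one shows that a time change extending to an automorphism of twistor space must be trivial (in the paper, by computing $\langle d\varphi_{\tau}(V),H\rangle$ and using absence of conjugate points, plus a regularity argument to see $\tau$ is smooth). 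Only after that does one appeal to the identity principle at the degenerate locus ($\Phi_1=\Phi_2$ on $SM$ implies equality everywhere), which is a nontrivial cited result resting on local integrability of $\D$ near $SM$; your sketch of it via ``infinite-order vanishing plus unique continuation'' is not an argument (a smooth map fixing $SM$ pointwise need not vanish to infinite order there), though citing the result would be acceptable. The essential missing ideas are the use of fibrewise holomorphy to rigidify the boundary action and the treatment of time changes.
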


\begin{theorem}\label{mainthm_anosov}
Let \( g_1 \) and \( g_2 \) be two Anosov metrics on \( M \) with \( \mathrm{Vol}(M, g_1) = \mathrm{Vol}(M, g_2) \). If \( \Phi\colon Z_1 \to Z_2 \) is a biholomorphism, then there exists an orientation preserving isometry \( \varphi\colon  (M, g_1) \to (M, g_2) \) such that
\[
\Phi = \varphi_\sharp \quad \text{or} \quad \Phi = \varphi_\sharp \circ a.
\]
\end{theorem}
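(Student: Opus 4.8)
The plan is to extract from the degeneracy of the complex structure along $SM=\partial Z$ that any biholomorphism must conjugate the two geodesic flows, and then to combine this with rigidity theorems for simple and for Anosov surfaces; the volume normalisations enter only at the end, to remove constant rescalings. \emph{Step 1: the geodesic flow is an invariant.} Writing $\mathcal{L}=T^{0,1}Z$, this is an integrable rank-two complex distribution on $Z$ with $\mathcal{L}\cap\overline{\mathcal{L}}=0$ on $Z^{\circ}$, while along $SM$ one has $\mathcal{L}\cap\overline{\mathcal{L}}=\mathbb{C}\cdot X$ with $X$ the geodesic vector field (in the flat model one checks directly that $\partial_{\bar z}+\mu^{2}\partial_{z}$ restricts on $\{|\mu|=1\}$ to a nonzero multiple of $X$, and the same computation in Fermi coordinates gives the general statement). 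Since the complex structure degenerates precisely along $SM$ and $\Phi_{*}\mathcal{L}_{1}=\mathcal{L}_{2}$, the biholomorphism $\Phi$ maps $SM_{1}$ to $SM_{2}$ and satisfies $\Phi_{*}(\mathbb{C}X_{1})=\mathbb{C}X_{2}$ there; as $SM$ is connected, $\Phi_{*}X_{1}=a\,X_{2}$ with $a$ nowhere vanishing of constant sign. The antipodal map satisfies $a_{*}X=-X$, so after replacing $\Phi$ by $\Phi\circ a$ if necessary we may assume $a>0$, i.e. $\Psi:=\Phi|_{SM}$ is an orientation-preserving orbit equivalence of the two geodesic flows.

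\emph{Step 2: upgrading the orbit equivalence — the main obstacle.} An orbit equivalence by itself is too weak: as already visible in the introduction, the flat torus admits biholomorphisms of its twistor space restricting on $SM$ to a non-isometric (even time-preserving) self-equivalence of the geodesic flow, and these are ruled out only because the torus is neither simple nor Anosov. The point is that $\Phi$ is a biholomorphism of the four-dimensional spaces, not just a map of their three-dimensional boundaries, so I would analyse the jet of $\mathcal{L}$ transverse to $SM$. This jet encodes, beyond $X$, the full Sasaki frame $(X,X_{\perp},V)$ and the connection one-form, equivalently the contact (Liouville) structure of $SM$; from $\Phi$ preserving this data I expect to deduce that the time-change function $a$ is, along the flow, a coboundary plus a constant $c$ — which on a simple or an Anosov surface (where the Liv\v{s}ic/Anosov structure is available and the torus's extra cohomological freedom is absent) forces $\Psi$, after a constant time rescaling by $c$, to be an honest conjugacy $\Psi\circ\phi^{g_{1}}_{t}=\phi^{g_{2}}_{ct}\circ\Psi$. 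In the simple case $M$ has boundary; here I would additionally use that $\Phi$ is a diffeomorphism of manifolds with corners to see that, after post-composing with the lift of a boundary diffeomorphism of $M$, $\Psi$ intertwines the scattering relations of the two metrics (away from glancing), which together with the conjugacy property yields a lens equivalence of $(M,g_{1})$ and $(M,g_{2})$. Turning the algebraic statement "$\Phi_{*}X_{1}$ is proportional to $X_{2}$" into this rigid conclusion — and doing so in a way that genuinely uses simplicity or the Anosov property — is the heart of the argument.

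\emph{Step 3: rigidity and conclusion.} For Anosov surfaces, a conjugacy of the geodesic flows up to a constant time rescaling matches the lengths of freely homotopic closed geodesics up to that constant, so marked length spectrum rigidity for Anosov surfaces (Otal, Croke, and extensions) gives an orientation-preserving diffeomorphism $\varphi$ with $\varphi^{*}g_{2}=\lambda g_{1}$; since $\mathrm{Vol}(M,g_{1})=\mathrm{Vol}(M,g_{2})$ we get $\lambda=1$ and $\varphi$ is an isometry. For simple surfaces, the lens equivalence together with $\mathrm{Vol}(\partial M,g_{1})=\mathrm{Vol}(\partial M,g_{2})$ and two-dimensional lens rigidity (Pestov--Uhlmann) produce the isometry $\varphi$ directly. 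In either case $\varphi$ can be chosen so that $\Psi$ and $\varphi_\sharp|_{SM_{1}}$ differ by a self-equivalence of the geodesic flow of $(M,g_{1})$ preserving the Sasaki data, which is itself the lift of an isometry and may be absorbed into $\varphi$; then $\varphi_\sharp^{-1}\circ\Phi$ is an automorphism of $Z_{1}$ restricting to the identity on $SM_{1}$. Finally, a biholomorphism of the compact space $Z_{1}$ that is the identity on $SM_{1}$ is the identity, by the maximum principle applied to $\varphi_\sharp^{-1}\circ\Phi-\Id$ in holomorphic charts near $SM_{1}$ together with unique continuation on the connected $Z_{1}^{\circ}$. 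Hence $\Phi=\varphi_\sharp$, and recalling the possible composition with $a$ in Step 1, $\Phi=\varphi_\sharp$ or $\Phi=\varphi_\sharp\circ a$.
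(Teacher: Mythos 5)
Your Step 1 matches the paper (Proposition \ref{prop_hmaps}), and your overall skeleton (orbit equivalence, rigidity theorem, identity principle) is the right shape, but the two decisive steps are missing or rest on incorrect assertions. First, your Step 2 is only a hope, not an argument: you say you ``expect to deduce'' that the time change is a coboundary plus a constant from the transverse jet of the involutive structure encoding the Sasaki frame and the contact form. A biholomorphism does \emph{not} automatically preserve the contact $1$-form --- that is an extra hypothesis in the paper's Section \ref{section:general}, and the Euclidean/torus automorphism from the introduction shows the restriction to $SM$ can be a nontrivial self-equivalence even though all the ``Sasaki data'' is there. The paper's actual argument (Proposition \ref{prop:aux}) is genuinely different: from $\phi_*(qX_1)=X_2$ and transitivity one gets $\phi^*\Alpha_2=c\,\Alpha_1+\omega$ with $\omega$ closed; Stokes plus the equal-volume hypothesis gives $c^2=1$, integration against a zero-homology invariant measure gives $c=1$, and --- the real work --- exactness of $\omega$ is forced by a thermodynamic-formalism argument (the measure of maximal entropy has zero winding cycle, so the Sharp minimiser of the pressure function in cohomology vanishes, and the homologically full property transfers this to the reparametrised flow). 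Nothing in your sketch supplies a substitute for this cohomological step, which is exactly the obstruction your own torus example highlights.

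Second, even granting a conjugacy up to constant rescaling, your Step 3 mishandles the residual ambiguity. The conjugacy rigidity result for Anosov surfaces (\cite[Corollary 1.3]{GLP23}; Otal--Croke only cover (non)positively curved cases) produces $\phi=F_\sharp\circ\varphi^1_\tau$ with $F$ an isometry and $\tau$ a possibly nontrivial time change along the flow; such a $\varphi^1_\tau$ is \emph{not} the lift of an isometry and does not ``preserve Sasaki data'', so it cannot simply be absorbed into $\varphi$. Killing $\tau$ requires using that $\varphi^1_\tau$ extends to an automorphism of twistor space together with the absence of conjugate points (Proposition \ref{timerigidity}): the holomorphicity forces $\langle d\varphi_{\tau}(V),H\rangle=0$, so either $\tau=0$ or one hits a conjugate point. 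Finally, your closing claim that an automorphism equal to the identity on $SM_1$ is the identity ``by the maximum principle in holomorphic charts near $SM_1$'' presupposes local integrability of $\D$ at the degenerate locus, which is a substantive result (Proposition \ref{prop_identityprinciple}, i.e.\ \cite[Corollary 1.7]{BMP24}), not a routine unique-continuation remark. As written, the proposal identifies the heart of the proof but does not carry it out.
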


\begin{remark}
If \( C = \mathrm{Vol}(\partial M, g_1) / \mathrm{Vol}(\partial M, g_2) \neq 1 \), then Theorem \ref{mainthm} still holds with \( \varphi \) being an isometry between \( g_1 \) and \( Cg_2 \), and \( \Phi(x, v) = (\varphi(x), C^{1/2}d\varphi_x(v)) \), possibly composed with the antipodal map. Theorem \ref{mainthm_anosov} can be modified analogously, with $C$ being a ratio of areas.
\end{remark}


The proofs of these theorems rely on several key ingredients:
\begin{itemize}[itemsep=3pt]
    \item
    Ideas from the proof of boundary rigidity for simple surfaces by Pestov and Uhlmann \cite{PeUh05}, and the recent proof of marked length spectrum rigidity for Anosov surfaces \cite{GLP23}.
    \item
    A characterisation of biholomorphisms \( \Phi\colon Z_1 \to Z_2 \) as orientation preserving diffeomorphisms that are {\it fibrewise holomorphic} and induce an orbit equivalence \( \phi=\Phi|_{SM_1}\colon SM_1 \to SM_2 \)  between geodesic flows.
    \item
    The identity principle for holomorphic maps, as established in \cite[Corollary 1.7]{BMP24}, which asserts that two holomorphic maps \( \Phi, \Psi\colon Z_1 \to Z_2 \) such that $\Phi|_{SM_{1}}=\Psi|_{SM_{1}}$ must agree everywhere.
\end{itemize}

 \subsection{Relationship to inverse problems} The question of biholomorphism rigidity naturally comes up in geometric inverse problems. Given metrics $g_1$ and $g_2$ on $M$, recall that a diffeomorphism $\phi\colon SM_1\to SM_2$ is called
\begin{itemize}
	\item[$\circ$] a {\it conjugacy}, if it intertwines the geodesic flows; 
	\item[$\circ$] an {\it orbit equivalence}, if it intertwines geodesic flows up to a time change.
\end{itemize}
Let  now $(M,g_0)$ be a simple surface and consider
\begin{eqnarray*}
\mathbb{M}_\mathrm{s} = \{g: \text{simple metric such that } g= g_0 \text{ on } TM|_{\partial M}\}
\end{eqnarray*}
 Each $g\in \mathbb{M}_\mathrm{s}$ induces a diffeomorphism $\alpha_g\in \mathrm{Diff}(\partial SM)$, called the {\it scattering relation} (cf.~Section \ref{section:preliminaries}).
Any two metrics $g_1,g_2\in \mathbb{M}_\mathrm{s}$ are related by an orbit equivalence\footnote{One constructs $\phi$ by flowing back along the first geodesic flow and forward along the second, achieving smoothness by moving slightly across $\partial SM$ --- see e.g.~\cite[Section 5.3]{BMP24}.}  $\phi$ 
 and any such orbit equivalence satisfies
\begin{equation*}
	\alpha_{g_2}\circ(\phi|_{\partial SM}) = (\phi|_{\partial SM})\circ \alpha_{g_1}\quad \text{ on } \partial SM.
\end{equation*}
Imposing the boundary condition $\phi|_{\partial SM} =\mathrm{Id}$ implies that $\alpha_{g_1}=\alpha_{g_2}$ and vice versa, if the scattering relations agree, then the metrics are related by a boundary fixing conjugacy --- as a consequence of the {\it scattering rigidity} proved in \cite{PeUh05}, this enforces that $g_1$ and $g_2$ are isometric via a boundary fixing isometry. Our first theorem demonstrates that this rigidity phenomenon persists, if the boundary condition is replaced by the requirement that $\phi$ holomorphically extends to transport twistor space. 

While our result relies on ideas from \cite{PeUh05}, the knowledge that no information is lost by focusing on transport twistor spaces suggests that {\it conjugacy rigidity problems} (cf.\,\cite[Section 4.6]{GuMa24}) might be amenable to using complex geometric methods on twistor space. However, such an approach requires a better understanding of which  orbit equivalences admit holomorphic extensions.

Besides proving rigidity, it is of interest to understand the structure of  the range $\mathbb A = \{\alpha_g:g\in \mathbb M_{\mathrm{s}}\}\subset \mathrm{Diff}(\partial SM)$ of the scattering map. In analogy to the range characterisations in \cite{Sha11, PeUh04,BoPa23}, one might hope that $\mathbb A$ is of the form
\[
	\mathbb A_{\mathcal H}=\{\psi^{-1}\circ \alpha_{g_0}\circ \psi: \psi\in \mathcal H\},\quad \mathcal H\subset \mathrm{Diff}(\partial SM),
\]	
where $\mathcal H$ is a suitable class of diffeomorphisms. Note that if $\mathcal H =\mathrm{Diff}(\partial SM)$, the aforementioned intertwining relation implies that $\mathbb A\subset \mathbb A_{\mathcal H}$, however the latter set also contains scattering data of non-simple metrics.
Guided by the just cited articles, where it was key to respect the complex structure of $M$ and its transport twistor space, one is tempted to consider
\[
	\mathcal H =\{\Phi|_{\partial SM}: \Phi\colon Z_{g}\to Z_{g_0} \text{ biholomorphism},~ g\in \mathbb M_\mathrm{s}\}.
\]
A consequence of Theorem \ref{mainthm} is that such a guess fails:  the orbit $\mathbb{A}_{\mathcal{H}}$ then only contains scattering data of metrics that are isometric to $g_0$. In particular, new ideas are needed in order to obtain a range characterisation for the scattering relation akin to those found in the Calder{\'o}n- and (linear and non-linear) X-ray tomography problems. 

\subsection*{Organisation of the article}
Section \ref{section:preliminaries} provides preliminaries on transport twistor spaces and properties of biholomorphisms. Section \ref{section:general} offers three general rigidity results, one for diffeomorphisms preserving the canonical contact 1-form, one for geodesic equivalences and another one for time changes. The proof of Theorem \ref{mainthm} is presented in Section \ref{section:mainthm}, and the proof of Theorem \ref{mainthm_anosov} is given in Section \ref{section:mainthm_anosov}.

\subsection*{Acknowledgements}   FM was partially supported by NSF-CAREER grant DMS-1943580 and GPP was partially supported by NSF grant DMS-2347868.

\section{Preliminaries}\label{section:preliminaries}

Let $(M,g)$ be a connected and oriented surface, possibly with a non-empty boundary $\partial M$. We also view $(M,g)$ as Riemann surface and write
\[
	\mu \cdot v = (\Re \mu) v + (\Im \mu) v^\perp,\quad v\in T_xM, \mu\in \C
\]
for the associated complex multiplication on $T_xM$, with $v^\perp$ being the rotation by $\pi/2$, counterclockwise according to the orientation.
If $\partial M\neq 0$, we write $\nu$ for the inward pointing unit normal to $\partial M$ and define $\nu_\perp =-\nu^\perp$.


 The unit tangent bundle $SM$ is a $3$-manifold,  possibly with a non-empty boundary $\partial SM$. The {\it geodesic flow} on $SM$ is denoted with $(\varphi_t)$.  The tangent bundle $TSM$ comes with a natural frame \[\{X,H,V\},\] where $X$ generates the  geodesic flow, $V$ generates the vertical flow $(x,v)\mapsto (x,e^{it} v)$, and $H=[V,X]$ is defined as commutator. The {\it Sasaki metric} on $SM$ is defined by requiring this frame to be orthonormal.

If $\partial M\neq \emptyset$ is strictly convex, the {\it scattering relation} of $g$ is the set
 \[
 	\Big\{\big((x,v),(y,w)\big)\in \partial SM \times \partial SM: \begin{array}{l} (y,w)=\varphi_t(x,v) \text{ for some } t\neq 0 \\
 	\text{or } x=y \text { and }  v=w\in T_x(\partial M)
 	\end{array} \Big\}.
 \] 
If $g$ is simple, this is the graph of a smooth diffeomorphism $\alpha\in \mathrm{Diff}(\partial SM)$ that we also refer to as scattering relation. We have $\alpha^2 =\mathrm{Id}$ and further $\alpha$ has the {\it glancing region}   $\partial_0SM = S( \partial M)$ as fixed point set.

\subsection{Transport twistor space} We equip the total space of the unit disk bundle $Z\to M$ with an involutive complex $2$-plane bundle $\mathscr D\subset T_\C Z$ (also referred to as {\it involutive structure} \cite{Tre92}), having the following properties:
\begin{enumerate}[label=\rm(\alph*)]
	\item 
	$
		\D \cap \bar \D = \begin{cases} 
			\C X & SM\\
			0 & Z\backslash SM
		\end{cases}		
	$\\[.3em]
	Away from $SM$ this implies that  $\mathscr D = \ker(J+i)$ for a complex structure $J$. In particular, $(Z^\circ,J)$ is a classical complex surface.\\[-1em]
	\item Let $\pi\colon Z\to M$ be the projection map. Equipping the fibres $Z_x=\pi^{-1}(\{x\})$ with their standard complex structure (induced by $g$ and the orientation), the embedding $Z_x\hookrightarrow Z$ is holomorphic. 
	\\[-1em]
	\item The orientation induced by $J$ is the canonical one on $TM$.
\end{enumerate}
For the construction of $\D$ see \cite{BoPa23}; in fact, $\D$ is uniquely characterised by these three properties. For $M=\R^2$ and $Z= \{(z,\mu)\in \C^2:|\mu|\le 1\}$, the interested reader may check at once that $\D=\C(\partial_{\bar z} + \mu^2 \partial_{z})\oplus \C \partial_{\bar \mu}$.

\begin{remark}\label{rem_proj}
The construction of transport twistor space is inspired by the more classical {\it projective twistor space} $Z_\mathbb P$, which has been used in the context of Zoll structures \cite{LeMa02, LM10,Roc11} and projective structures \cite{Met21,MePa20}. 
\end{remark}

\subsubsection{Holomorphic functions}
 For $U\subset Z$ open, we denote with
\[
	\mathcal A(U)=\{f\in C^\infty(U): df(\D)=0\}
\]
the algebra of holomorphic functions on $U$, understood to be smooth up to the boundary, if $U\cap \partial Z\neq \emptyset$. The restriction $f\mapsto f|_{SM}$ is an isomorphism
\[
	\mathcal A(Z) \xrightarrow{\sim} \left\{u\in C^\infty(SM): Xu = 0 \text{ and } u \text{ is fibrewise holomorphic} \right\},
\]
where $u$ being {\it fibrewise holomorphic} means that for all $x\in M$ the function $u(x,\cdot)\colon S_x M   = \partial Z_x \to \C$ extends to a holomorphic map on the fibre $Z_x$, or what is equivalent, all negative Fourier modes vanish (cf.~\cite[Proposition 4.4]{BoPa23}).

For simple surfaces more can be said.
\begin{proposition}\label{prop_scathol} If $(M,g)$ is simple, then $f\mapsto f|_{\partial SM}$ is an isomorphism
\[
	\mathcal A(Z) \xrightarrow{\sim} \left\{u\in C^\infty(\partial SM): u=u\circ \alpha \text{ and } u \text{ is fibrewise holomorphic} \right\}.
\]
\end{proposition}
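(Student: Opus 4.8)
The plan is to reduce the statement, using the isomorphism $\mathcal A(Z)\xrightarrow{\sim}\{u\in C^\infty(SM):Xu=0,\ u\text{ fibrewise holomorphic}\}$ recalled just above, to a well-posedness question for the transport equation on simple surfaces. Write $\mathcal I:=\{u\in C^\infty(SM):Xu=0,\ u\text{ fibrewise holomorphic}\}$ and $\mathcal J:=\{u\in C^\infty(\partial SM):u=u\circ\alpha,\ u\text{ fibrewise holomorphic}\}$. Since $f\mapsto f|_{\partial SM}$ factors as $\mathcal A(Z)\xrightarrow{\sim}\mathcal I\to C^\infty(\partial SM)$ (restriction to $SM$, then to $\partial SM$), it suffices to prove that restriction to $\partial SM$ is a bijection $\mathcal I\to\mathcal J$. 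That this map takes values in $\mathcal J$ is immediate: a first integral is constant along geodesics, so it agrees with its $\alpha$-pullback on $\partial SM$ (the incoming and outgoing endpoints of a geodesic lie on the same orbit, and $\alpha$ fixes the glancing set), and fibrewise holomorphy is inherited by restriction. Injectivity is equally quick, since a simple surface has no trapped geodesics: a first integral vanishing on $\partial SM$ is constant along each geodesic through a boundary point, hence vanishes on all of $SM$.

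For surjectivity, fix $v\in\mathcal J$. First I would produce a smooth first integral on $SM$ with boundary trace $v$. Choose any extension $w_0\in C^\infty(SM)$ of $v$, solve the influx problem $Xq=-Xw_0$ with $q=0$ on the incoming part of $\partial SM$, and set $u:=w_0+q$; the regularity theory for the transport equation on simple surfaces (Pestov--Uhlmann \cite{PeUh05}) gives $q\in C^\infty(SM)$, so $u\in C^\infty(SM)$, $Xu=0$, and $u=v$ on the incoming boundary. That in fact $u|_{\partial SM}=v$ then follows from $v=v\circ\alpha$: at an outgoing point $(x,\xi)$ one has $u(x,\xi)=u(\alpha(x,\xi))=v(\alpha(x,\xi))=v(x,\xi)$ because $u$ is a first integral and $\alpha(x,\xi)$ is the incoming endpoint of the same geodesic, while on the glancing set $u=v$ by construction. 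This is the step in which the scattering-relation invariance of $v$ is essential---without it there need not be any first integral with boundary trace $v$, and it is precisely the compatibility condition that makes the boundary problem solvable.

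It remains to upgrade $u$ to an element of $\mathcal I$, i.e.\ to show that $u$ is fibrewise holomorphic, and this is the crux of the argument and the only place the fine analysis of simple surfaces enters. Let $u_{<0}$ denote the sum of the negative fibrewise Fourier modes of $u$; the fibrewise Fourier projectors preserve $C^\infty(SM)$ and $v$ is fibrewise holomorphic, so $u_{<0}\in C^\infty(SM)$ and $u_{<0}|_{\partial SM}=0$. Since $X$ shifts the fibrewise Fourier degree by $\pm1$, the identity $Xu_{<0}=-X(u-u_{<0})$ (which uses $Xu=0$) displays $h:=Xu_{<0}$ simultaneously as a function of fibrewise degree $\le0$ and of fibrewise degree $\ge-1$; hence $h$ has only the fibrewise Fourier modes $-1$ and $0$. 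Integrating $h$ along any geodesic joining two boundary points and using $u_{<0}|_{\partial SM}=0$ shows that the geodesic X-ray transform of $h$ vanishes. At this point I would invoke the tensor-tomography / s-injectivity results for simple surfaces (Paternain--Salo--Uhlmann; cf.\ the circle of ideas in \cite{PeUh05}), in the refined form stating that a smooth function of bounded fibrewise Fourier degree with vanishing geodesic X-ray transform equals $X$ applied to a function that vanishes on $\partial SM$ and whose fibrewise Fourier degree has dropped by one: applied to $h$ (degree $\le1$) this yields $p\in C^\infty(SM)$ with $p|_{\partial SM}=0$, $Xp=h$, and $p$ supported in the single fibrewise Fourier mode $0$. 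Uniqueness for the transport equation (again by absence of trapped geodesics) forces $p=u_{<0}$; but $u_{<0}$ has only negative modes, so $u_{<0}=0$ and $u$ is fibrewise holomorphic. Then $u\in\mathcal I$, and the isomorphism $\mathcal A(Z)\cong\mathcal I$ produces the unique $f\in\mathcal A(Z)$ with $f|_{SM}=u$, whence $f|_{\partial SM}=v$. The main obstacle, as flagged, is this fibrewise-holomorphy step; the transport regularity in the second step and the $\alpha$-invariance matching are the other technical inputs, and the rest is formal.
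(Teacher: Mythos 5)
This is essentially the paper's proof: after the reduction to first integrals, the paper invokes \cite[Theorem 5.1.1]{PSU23} for the smooth first-integral extension of $\alpha$-invariant boundary data and the proof of \cite[Lemma 11.5.3]{PSU23} for propagating fibrewise holomorphy from $\partial SM$ into $SM$, and your mode-counting/tensor-tomography argument for the latter is precisely that lemma's proof. The one imprecision is in your extension step: for a general smooth source the influx solution $q$ is \emph{not} smooth up to the glancing region (source $\equiv 1$ produces the exit time, which fails to be smooth at $\partial_0 SM$), so the smoothness of $u$ really rests on the $\alpha$-invariance of $v$ via the cited Theorem 5.1.1 --- the invariance is needed for regularity at $\partial_0 SM$, not only for matching the outgoing trace.
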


\begin{proof}
By \cite[Theorem 5.1.1]{PSU23} every $\alpha$-invariant function $u\in C^\infty(\partial SM)$ extends to a smooth solution of $Xu = 0$ on $SM$ and the proof of \cite[Lemma 11.5.3]{PSU23} shows that $u|_{\partial SM}$ being fibrewise holomorphic implies that $u$ is fibrewise holomorphic on all of $SM$. By the discussion above, this implies that the restriction map $f\mapsto f|_{\partial SM}$ is onto. Injectivity is obvious. 
\end{proof}

Equipping $M$ with the complex structure induced by $g$ and the orientation, the zero section embedding $M\hookrightarrow Z$ is holomorphic, and hence there is a restriction map as follows:
\[
	\mathcal A(Z)\to \mathcal A(M),\quad f\mapsto f|_M
\]
Here $\mathcal A(M)$ is  the algebra of holomorphic functions on $M$, smooth up to the boundary. A classical result of Pestov--Uhlmann \cite{PeUh05} can then be rephrased as a type of Cartan extension theorem:

\begin{proposition}[Pestov--Uhlmann extension -- Corollary 4.7 in \cite{BoPa23}] If $(M,g)$ is simple, then the restriction map $\mathcal A(Z)\to \mathcal A (M)$ is onto.\qed
\label{puext1}
\end{proposition}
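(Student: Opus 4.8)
The plan is to transfer the statement to the first–integral description of $\mathcal A(Z)$. Under the isomorphism $\mathcal A(Z)\xrightarrow{\sim}\{u\in C^\infty(SM):Xu=0 \text{ and } u \text{ fibrewise holomorphic}\}$ recalled above, the restriction map $\mathcal A(Z)\to\mathcal A(M)$, $f\mapsto f|_M$, is identified with $u\mapsto u_0$, the fibrewise average of $u$ (equivalently, its zeroth Fourier mode): for $x\in M$, the value $f|_M(x)=f(x,0_x)$ is the value at the origin $0_x$ of the holomorphic function $f|_{Z_x}$ on the disc fibre $Z_x$, which by the mean value property equals $\frac{1}{2\pi}\int_{S_xM}u(x,\cdot)$, and it lies in $\mathcal A(M)$ because $M\hookrightarrow Z$ is holomorphic. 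Thus surjectivity of $\mathcal A(Z)\to\mathcal A(M)$ is equivalent to the statement that every $h\in\mathcal A(M)$ occurs as $u_0$ for some fibrewise holomorphic first integral $u\in C^\infty(SM)$.

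To produce such a $u$, I would work with the splitting $X=X_++X_-$, where $X_\pm$ sends a function of vertical Fourier degree $k$ to its part of degree $k\pm1$. For $u=\sum_{k\ge0}u_k$ the equation $Xu=0$ unwinds, degree by degree, into
\[
X_-u_0=0,\qquad X_-u_1=0,\qquad X_-u_{k+1}=-X_+u_{k-1}\quad(k\ge1).
\]
The first equation is the Cauchy--Riemann equation for $u_0$ on the Riemann surface $M$ --- which is why $u_0$ is automatically holomorphic --- and is solved by setting $u_0=h$. Choosing $u_1=0$ kills all odd modes and leaves the recursion $X_-u_{2l+2}=-X_+u_{2l}$, $l\ge0$. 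Simplicity enters through the solvability of each step: the Pestov identity yields injectivity of $X_+$ on degrees $\ge1$ together with a closed-range and stability estimate, so that $X_-v=w$ can be solved whenever $w$ has degree $\ge1$; at each stage I would take the solution $u_{2l+2}$ orthogonal to $\ker X_-$.

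I expect the main obstacle to be not the existence of the individual modes but the convergence of the series $u=h+\sum_{l\ge1}u_{2l}$ in $C^\infty(SM)$, since a naive use of the solvability estimate loses a derivative at every step. Overcoming this is precisely the substance of the Pestov--Uhlmann construction \cite{PeUh05} (see also \cite{PSU23}): the quantitative stability encoded in the Pestov identity on a surface without conjugate points lets one choose the $u_{2l}$ so that their Sobolev norms decay fast enough for the sum to be smooth. Granting this, $u$ is a fibrewise holomorphic first integral with $u_0=h$, the element $f\in\mathcal A(Z)$ with $f|_{SM}=u$ satisfies $f|_M=u_0=h$, and surjectivity follows. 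An equivalent route, closer to the ray-transform formulation of \cite{PeUh05}, would be to carry out the same construction on $\partial SM$ via Proposition~\ref{prop_scathol}.
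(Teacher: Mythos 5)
Your reduction is correct and is essentially the same move the paper makes: under the isomorphism $\mathcal A(Z)\cong\{u\in C^\infty(SM): Xu=0,\ u \text{ fibrewise holomorphic}\}$, restriction to the zero section is the zeroth Fourier mode (your mean-value argument is fine), so surjectivity amounts to realising each $h\in\mathcal A(M)$ as $u_0$ of a smooth fibrewise holomorphic first integral. The paper does not prove this itself: it cites \cite[Corollary 4.7]{BoPa23}, whose content is exactly this realisation statement, and the same result is \cite[Theorem 12.2.4]{PSU23} in the case $m=0$ --- indeed that is the theorem the paper invokes to prove the $1$-form analogue, Proposition~\ref{puext2}. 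So as a ``reduce and cite'' argument your proposal matches the paper's treatment.

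The caveat concerns your sketch of how the cited existence result is proved. The mode-by-mode recursion $\eta_-u_{2l+2}=-\eta_+u_{2l}$ with minimal-norm solutions and a convergence estimate is \emph{not} the substance of the Pestov--Uhlmann construction, and it is not known that this series can be made to converge: the solvability estimate for $\eta_-$ loses derivatives at every step, and there is no mechanism in the Pestov identity that makes the Sobolev norms of the $u_{2l}$ decay. The actual argument in \cite{PeUh05} (and \cite[Chapter 12]{PSU23}) avoids the recursion altogether: one uses surjectivity of the adjoint X-ray transform $I_0^*$ on a simple surface (coming from ellipticity of the normal operator $I_0^*I_0$ and boundary regularity) to produce smooth first integrals with prescribed zeroth mode, and then the fibrewise Hilbert transform together with the commutator identity $[H,X]u=X_\perp u_0+(X_\perp u)_0$ to upgrade these to \emph{fibrewise holomorphic} first integrals. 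Since you explicitly ``grant'' the key step by citing \cite{PeUh05,PSU23}, your proof is acceptable as it stands, but the granted statement should be the prescribed-zeroth-mode theorem itself, not the convergence of your series; read literally, the series argument is a gap.
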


More generally,  for fixed $x\in M$ there is a Taylor expansion $f(x,v) =f(x,0)+ \theta_x(v) + O(|v|^2)$ about $v=0$ and holomorphicity of $f(x,\cdot)\colon Z_x\to \C$ implies that $\theta$ is a $(1,0)$-form on $M$. In fact, $\theta \in \mathcal H_1(M)$, the space of {\it holomorphic  $1$-forms}, again understood to be smooth up to the boundary.

\begin{proposition}[Pestov--Uhlmann extension for holomorphic $1$-forms] If $(M,g)$ is simple, then the map $\mathcal A(Z)\to \mathcal H_1(M)$, $f\mapsto \theta$ is onto. (And the preimage of $\theta$ may be chosen such that $f\circ a = -f$).
\label{puext2}
\end{proposition}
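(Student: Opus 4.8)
The plan is to reduce the statement, in a couple of easy steps, to the Pestov--Uhlmann solvability result for simple surfaces that already underlies Proposition \ref{puext1}.

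First, the symmetry condition $f\circ a=-f$ is essentially for free once surjectivity is known, so the real content is surjectivity. Indeed, suppose $f\in\mathcal A(Z)$ has linearisation $\theta$, i.e.\ $f(x,v)=f(x,0)+\theta_x(v)+O(|v|^2)$. Since the antipodal map $a$ is an automorphism, $f\circ a\in\mathcal A(Z)$, and as $\theta_x$ is linear the expansion $(f\circ a)(x,v)=f(x,0)-\theta_x(v)+O(|v|^2)$ shows its linearisation is $-\theta$. Hence $g:=\tfrac12(f-f\circ a)\in\mathcal A(Z)$ still has linearisation $\theta$, and using $a^2=\mathrm{Id}$ it satisfies $g\circ a=-g$ (which in particular forces $g|_M=0$). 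So I would next focus on proving that $f\mapsto\theta$ is onto.

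Second, I would transfer the problem to $SM$ through the isomorphism $\mathcal A(Z)\xrightarrow{\sim}\{u\in C^\infty(SM):Xu=0,\ u\text{ fibrewise holomorphic}\}$ recalled above, under which the linearisation of $f$ corresponds to the degree-one Fourier mode $u_1$ of $u=f|_{SM}$ (and $f|_M$ to the degree-zero mode $u_0$). The task becomes: given $\theta\in\mathcal H_1(M)$, produce a fibrewise holomorphic first integral $u$ on $SM$ with $u_1=h_\theta$, where $h_\theta(x,v)=\theta_x(v)$ is the pure degree-one function attached to $\theta$. Writing $X=\eta_++\eta_-$ for the splitting of the geodesic vector field into its components of Fourier degree $+1$ and $-1$ (as in \cite{PSU23}), the holomorphy of the $(1,0)$-form $\theta$ is exactly the vanishing $\eta_-h_\theta=0$ (it encodes $\bar\partial\theta=0$), so that $Xh_\theta=\eta_+h_\theta$ is of pure degree two.

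Third, I would look for $u=h_\theta+r$ with $r$ a sum of odd Fourier modes of degree $\ge 3$. Expanding $Xu=0$ by Fourier degree, all even and all degree-$\le1$ components vanish automatically (the degree-zero one precisely because $\eta_-h_\theta=0$), and what remains is the recursion
\[
\eta_-r_3=-\eta_+h_\theta,\qquad \eta_-r_{2j+3}=-\eta_+r_{2j+1}\quad (j\ge 1).
\]
Each step is an equation $\eta_-p=q$ with $q$ a known mode of degree $\ge 2$ and $p$ of degree one higher, which on a simple surface has a smooth solution; this is a special case of (and is proved exactly as) the solvability statement behind Proposition \ref{puext1}, where one needs the same for all $q$ of degree $\ge1$. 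Choosing these solutions via the Pestov--Uhlmann right inverses, one obtains $u=h_\theta+\sum_{j\ge1}r_{2j+1}$ with $Xu=0$, $u_1=h_\theta$, $u_0=0$, which transported back to $\mathcal A(Z)$ and symmetrised as in the first step gives the required $f$. The only genuinely non-formal point --- and hence the main obstacle --- is to verify that the iterated solutions $r_{2j+1}$ can be chosen to decay fast enough in suitable Sobolev norms for $\sum_j r_{2j+1}$ to define a smooth section up to $\partial SM$. This is where simplicity (rather than mere absence of conjugate points) enters, through the good mapping properties of the relevant attenuated transport / X-ray operators, and it runs entirely parallel to the corresponding convergence step in the proof of Proposition \ref{puext1}, so I do not expect any new analytic input to be necessary.
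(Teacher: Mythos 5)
Your preliminary reductions are fine and consistent with the paper: the symmetrisation argument giving $f\circ a=-f$, the transfer to $SM$ via the restriction isomorphism, and the identification of holomorphicity of $\theta$ with $\eta_-h_\theta=0$ are all correct. The gap is in the final step. Solving the recursion $\eta_-r_3=-\eta_+h_\theta$, $\eta_-r_{2j+3}=-\eta_+r_{2j+1}$ term by term and then summing requires showing that the solutions can be chosen so that $\sum_j r_{2j+1}$ converges in $C^\infty(SM)$, smoothly up to $\partial SM$ and with rapidly decaying Fourier modes. You flag this, but then dismiss it as ``entirely parallel to the corresponding convergence step in the proof of Proposition \ref{puext1}''. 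There is no such step: Proposition \ref{puext1} and the Pestov--Uhlmann machinery behind it are not proved by a mode-by-mode recursion plus summation, but by global solvability results for the transport equation (surjectivity of adjoints of X-ray transforms). Each individual equation $\eta_-p=q$ is indeed solvable on the disc (it is a $\bar\partial$-type problem and does not even need simplicity), but its solution is non-unique --- the kernel of $\eta_-$ in each degree is an infinite-dimensional space of holomorphic sections --- and nothing in your argument provides the quantitative control on the successive choices that summability would require. Taming exactly this recursion is the difficulty that the global methods are designed to circumvent, so as written the argument does not close.

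The paper's proof sidesteps the issue entirely: by \cite[Theorem 12.2.4]{PSU23} (a global result for simple surfaces) one can prescribe the degree-one Fourier mode of a smooth first integral, i.e.\ there is $u\in C^\infty(SM)$ with $Xu=0$ and $u_1=h_\theta$, with no holomorphicity demanded of $u$. One then truncates, setting $v:=\sum_{k\ge 0}u_{2k+1}$. Since the transport equation couples only neighbouring Fourier modes and $\eta_-h_\theta=0$, the truncation is again a first integral; it is fibrewise holomorphic by construction, odd (so $v\circ a=-v$ comes for free, without a separate symmetrisation), and smooth because its modes are Fourier modes of the already-smooth $u$, so no convergence argument is needed. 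Its unique extension to $Z$ is the desired $f$. If you want to salvage your route, replace the term-by-term recursion by this citation-plus-truncation step.
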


\begin{proof}[Proof (using the notation from Section 6.1 in \cite{PSU23})]  By \cite[Theorem 12.2.4]{PSU23}, gi\-ven any $g\in \Omega_m$ we can find a smooth solution $u$ to $Xu=0$ whose
$m$th Fourier mode $u_m$ is given by $g$.  Giving an element $\theta\in \mathcal H_1(M)$ is equivalent to giving $g\in \Omega_{1}$ with $\eta_{-}g=0$. Thus $v:=\sum_{k\geq 0}u_{2k+1}$ is fibrewise holomorphic such that $Xv=0$ and with $v_1=g$. Its unique extension $f$ to $Z$ is the required function.
\end{proof}

For closed Anosov surfaces, where $\mathcal A(Z)\cong \mathcal A(M)\cong \C$, there are suitable replacements  \cite[Theorem 1.2]{BLP23} -- here, we will not need this directly, however.


\subsubsection{Holomorphic maps}
 A {\it holomorphic map} $\Phi\colon (Z,\D)\to (Z',\D')$ into another transport twistor space (or a into classical complex manifold with $\D' = T^{0,1}Z'$) is {\it per definitionem} a map that is smooth up to the boundary of $Z$ and that satisfies
\[
	d\Phi_{(x,v)}(\D_{(x,v)}) \subset \D'_{\Phi(x,v)}\quad\text {for all } (x,v)\in Z.
\]
As this is a closed condition, it suffices to verify holomorphicity in the interior $Z^\circ$, where it corresponds to the familiar notion from complex geometry.

\begin{proposition}\label{prop_hmaps}
	Let $\Phi\colon Z\to Z'$ be an orientation preserving diffeomorphism between transport twistor spaces. If $\Phi$ is a biholomorphism, then:
	\begin{enumerate}[label=\rm(\roman*)]
		\item\label{hmaps1} $\Phi$ restricts to a diffeomorphism  $\phi \colon SM\to SM'$ such that 
		\[
			\phi_*X \in \R X'\quad \text{ and } \quad \phi_*V\in \R X' \oplus \R V'.
		\]
		\item\label{hmaps2} the map $\Phi(x,\cdot)\colon Z_x\to Z'$ is holomorphic for all $x\in M$.
	\end{enumerate}
\end{proposition}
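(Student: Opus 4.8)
The plan is to extract everything from one algebraic fact about a biholomorphism $\Phi$, namely that
\[
	d\Phi_p(\D_p)=\D'_{\Phi(p)}\qquad\text{for every }p\in Z.
\]
On $Z^\circ$ this is just the definition of a biholomorphism of the complex surfaces; since holomorphicity of a map is a closed condition it propagates to $SM$, giving $d\Phi(\D)\subset\D'$ everywhere, and applying the same to $\Phi^{-1}$ supplies the reverse inclusion. Passing to conjugates also gives $d\Phi_p(\overline{\D_p})=\overline{\D'_{\Phi(p)}}$, and since $d\Phi_p$ is a linear isomorphism it carries $\D_p\cap\overline{\D_p}$ onto $\D'_{\Phi(p)}\cap\overline{\D'_{\Phi(p)}}$. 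By property (a) the set of points where this intersection is non-zero is exactly $SM$ (resp.\ $SM'$), so $\Phi(SM)=SM'$ and $\phi:=\Phi|_{SM}\colon SM\to SM'$ is a diffeomorphism. Part (ii) is then immediate from property (b): holomorphicity of $Z_x\hookrightarrow Z$ up to the boundary means $T^{0,1}Z_x\subset\D$ pointwise, so $d\Phi(T^{0,1}Z_x)\subset d\Phi(\D)=\D'$, which is exactly the statement that $\Phi(x,\cdot)\colon Z_x\to Z'$ is holomorphic.

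For part (i), the inclusion $\phi_*X\in\R X'$ just restates what we already have: on $SM$ one has $\C X=\D\cap\overline\D$, so $d\Phi$ maps $\C X$ to $\C X'$, and $d\Phi(X)$ being a real vector forces it into $\R X'$. The substantial claim is $\phi_*V\in\R X'\oplus\R V'$, and here I would compute $\D+\overline\D$ along $SM$. Fix $p=(x,v)\in SM$ and choose $R\in T_pZ_x$ transverse to $SM$, so that $\{X_p,H_p,V_p,R_p\}$ is a basis of $T_pZ$. From $\dim_\C(\D_p\cap\overline{\D_p})=1$ we get $\dim_\C(\D_p+\overline{\D_p})=3$; on the other hand $\D_p+\overline{\D_p}$ contains $\C X_p$ and it contains $T^{0,1}_pZ_x+T^{1,0}_pZ_x$, which is the full complexified tangent space of the curve $Z_x$ at $p$, namely $\C V_p\oplus\C R_p$. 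Hence $\D_p+\overline{\D_p}=\C X_p\oplus\C V_p\oplus\C R_p$, a conjugation-invariant subspace, so it is the complexification of the real hyperplane $W_p:=\R X_p\oplus\R V_p\oplus\R R_p\subset T_pZ$ (the one complementary to $\R H_p$). Since $d\Phi$ is a real isomorphism that commutes with conjugation and sends $\D+\overline\D$ to $\D'+\overline{\D'}$, it maps $W_p$ onto the corresponding hyperplane $W'_{\phi(p)}=\R X'\oplus\R V'\oplus\R R'\subset T_{\phi(p)}Z'$. In particular $d\Phi(V_p)\in W'_{\phi(p)}$, i.e.\ $d\Phi(V_p)$ has vanishing $H'$-component; and since $V_p\in T_pSM$ and $\phi$ maps $SM$ diffeomorphically onto $SM'$, we also have $d\Phi(V_p)\in T_{\phi(p)}SM'=\R X'\oplus\R H'\oplus\R V'$. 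Intersecting the two subspaces yields $d\Phi(V_p)\in\R X'\oplus\R V'$, as required.

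The one step requiring genuine care, and the one I expect to be the main obstacle to writing cleanly, is locating $\D$ relative to the Sasaki frame along $SM$ precisely enough to recognise that the ``missing direction'' of $\D+\overline\D$ is $\R H$. The dimension count above achieves this using only $T^{0,1}Z_x\subset\D$ and the fact that the fibre $Z_x$ is a complex curve meeting $SM$ transversally, so no explicit local model is needed. A more hands-on alternative, if one prefers, is to observe that for a suitable transverse generator $R$ one has $T^{0,1}_pZ_x=\C(R_p+iV_p)$, hence $\D_p=\C X_p\oplus\C(R_p+iV_p)$ outright; using the analogous description $\D'_{\phi(p)}=\C X'_{\phi(p)}\oplus\C(R'_{\phi(p)}+iV'_{\phi(p)})$ one writes $d\Phi(R+iV)=\lambda X'+\kappa(R'+iV')$ with $\lambda,\kappa\in\C$ and reads off from the $X',H',V',R'$-components that $d\Phi(V)$ has no $H'$-component. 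Beyond this, the argument is bookkeeping with the isomorphism $d\Phi$ and the defining properties (a) and (b) of $\D$.
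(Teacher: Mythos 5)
Your proposal is correct and follows essentially the same route as the paper: identify $SM$ and the direction $\R X$ via the degeneracy locus $\D\cap\overline{\D}$, then use holomorphicity of the fibres plus a dimension count to see that along $SM$ the space $\D+\overline{\D}$ is the complexification of $\R X\oplus\R V\oplus\R V_\perp$ (your $R$ is the paper's $V_\perp$), and intersect with $TSM'$ to place $\phi_*V$ in $\R X'\oplus\R V'$. The only cosmetic difference is that the paper intersects $(\D+\overline{\D})$ with the real tangent bundle of $SM$ on both sides at once, while you keep the real hyperplane and intersect at the target; the substance is identical.
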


\begin{remark}
	 Using that $\D$ is {\it uniquely characterised} by the above three properties, one can show that the conditions (i) and (ii) are also {\it sufficient} for $\Phi$ to be a biholomorphism, but we will not need this here. 
\end{remark}


\begin{proof}[Proof] Consider $(x,v)\in SM$ and
\[
	A=d\Phi_{(x,v)}\colon (T_\C Z)_{(x,v)} \to (T_\C Z')_{\Phi(x,v)}.
\]
This maps $\D(x,v)$ into $\D'(\Phi(x,v))$ and, being the complexification of a real linear map, satisfies $A(\bar w) = \overline{Aw}$. 
Thus $X(x,v)$ is sent to a nonzero  real vector inside $\D'(\Phi(x,v))$, which enforces
\[
	\Phi(x,v)\in SM'\quad \text{ and } \quad \Phi_*X(x,v) \in \R X'(\Phi(x,v)).
\]
Hence $\phi=\Phi|_{SM}$ is a smooth map $SM\to SM'$ and satisfies the first condition in \ref{hmaps1}; repeating the argument with $\Phi^{-1}$ shows that $\phi$ is a diffeomorphism. 

Next, we claim that \begin{equation}\label{dplusdbar}
	(\D\oplus\bar \D)\cap TSM  = \R X\oplus \R V.
\end{equation}
Indeed, the inclusion $Z_x\hookrightarrow Z$ being holomorphic implies that $TZ_x=T^{0,1}Z_x\oplus T^{1,0}Z_x \subset \D\oplus \bar \D$ and thus on $SM$ both $V$ and $V_\perp$ (the normal vector to $SM\subset TM$) lie in $\D\oplus \bar \D$. Thus, on $SM$ we have $\C X\oplus \C V\oplus \C V_\perp \subset \D\oplus \bar \D$; for dimension reasons we must have equality, and intersecting with the real tangent bundle $TSM$ yields \eqref{dplusdbar}.

Using  \eqref{dplusdbar} on both $SM$ and $SM'$ we conclude that $A$ sends $V(x,v)$ into $\R X' \oplus \R V'$, which the second half of \ref{hmaps1}. Property \ref{hmaps2} is obvious.
\end{proof}

In \cite{BMP24} we showed that $\D$ is {\it locally integrable}, also near points on $SM$, where the Newlander--Nirenberg theorem cannot be applied due to the degeneracy of $\D$. This has the following consequence:

\begin{proposition}[Corollary 1.7 in \cite{BMP24}]\label{prop_identityprinciple} Let $\Phi_1,\Phi_2\colon Z\to Z'$ be two holomorphic maps with $\Phi_1=\Phi_2$ on $SM$. Then $\Phi_1=\Phi_2$ everywhere.
\end{proposition}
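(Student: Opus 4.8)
The statement is \cite[Corollary 1.7]{BMP24}, where it is deduced from the local integrability of $\D$; here is the shape of the argument. The elementary core is the function case: if $f\in\mathcal A(Z)$ vanishes on $SM$, then $f\equiv 0$. Indeed $f$ is fibrewise holomorphic, so for each $x\in M$ the restriction $f(x,\cdot)$ is a holomorphic function on the closed disc $Z_x$ vanishing on the bounding circle $\partial Z_x=S_xM\subseteq SM$, hence $f(x,\cdot)\equiv 0$ by the maximum principle and thus $f\equiv 0$. (Equivalently, one invokes the injectivity of $\mathcal A(Z)\to C^\infty(SM)$ recorded above.) The remaining task is to upgrade this from functions to maps.

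For maps, let $\Phi_1,\Phi_2\colon Z\to Z'$ be holomorphic with $\phi:=\Phi_1|_{SM}=\Phi_2|_{SM}$, and fix $p_0\in Z$; we want $\Phi_1(p_0)=\Phi_2(p_0)$, which is trivial if $p_0\in SM$, so assume $p_0\in Z^\circ$. The key input is the local integrability of $\D'$ established in \cite{BMP24}: near any point $q$ of $Z'$ --- including points of the characteristic locus $SM'$, where Newlander--Nirenberg does not apply --- there is a neighbourhood $U'\ni q$ and holomorphic first integrals $w'_1,w'_2\in\mathcal A(U')$ whose differentials are $\C$-linearly independent at every point. Using these one argues in two steps. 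First, agreement propagates off $SM$: fix $x$ and a boundary point $\zeta_0\in\partial Z_x$ with $q_0:=\Phi_1(x,\zeta_0)=\Phi_2(x,\zeta_0)$; the differences $w'_j\circ\Phi_1(x,\cdot)-w'_j\circ\Phi_2(x,\cdot)$ are holomorphic near $\zeta_0$ in the fibre disc $Z_x$, smooth up to $\partial Z_x$, and vanish on an arc, so by one-variable Schwarz reflection they vanish on a one-sided neighbourhood of $\zeta_0$; letting $x$ and $\zeta_0$ vary, $w'=(w'_1,w'_2)$ satisfies $w'\circ\Phi_1=w'\circ\Phi_2$ on a one-sided neighbourhood of $SM$ in $Z$. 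Second, over the interior $Z'^\circ$ (where $\D'=\ker(J'+i)$) the first-integral map $w'$ is a local biholomorphism, so this yields $\Phi_1=\Phi_2$ on an open subset of $Z^\circ$ accumulating at $SM$ (away from the degenerate situation discussed below); one then propagates through the connected $Z^\circ$ by the classical identity theorem for holomorphic maps, obtaining $\Phi_1=\Phi_2$ on $Z^\circ$ and hence on $Z$.

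The delicate point --- and the reason one needs the full strength of the local integrability theorem of \cite{BMP24} rather than just Newlander--Nirenberg on $Z^\circ$ --- is the behaviour along the characteristic locus $SM'$. There the first integrals $w'_1,w'_2$ fail to be coordinates: their common differentials annihilate the real geodesic direction $X'\in\D'\cap\bar\D'$, so $w'$ collapses geodesic orbits of $SM'$ to points, and correspondingly the holomorphicity constraint degenerates for maps whose image lies in (or grazes) $SM'$. Passing from $w'\circ\Phi_1=w'\circ\Phi_2$ back to $\Phi_1=\Phi_2$ at points that $\Phi_i$ sends into $SM'$ therefore requires a separate, more careful analysis, and this --- combining the local integrability with the fine structure of holomorphic maps near the degeneracy locus --- is where the real work lies. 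I expect this characteristic-set analysis to be the main difficulty; once it is in hand, the rest is the routine identity-theorem and reflection machinery sketched above.
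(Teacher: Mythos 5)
The paper does not prove this statement at all: it is quoted verbatim as Corollary 1.7 of \cite{BMP24}, with the only accompanying remark being that it is a consequence of the local integrability of $\D$ established there (including at points of $SM$, where Newlander--Nirenberg fails). So the relevant comparison is between your sketch and the argument in \cite{BMP24}, not anything in this paper. Your reconstruction correctly isolates the two genuine ingredients --- injectivity of $\mathcal A(Z)\to C^\infty(SM)$ via fibrewise holomorphicity for the function case, and local holomorphic first integrals of $\D'$ near $SM'$ for the map case --- and the reflection-plus-identity-theorem propagation off $SM$ is the right general shape.

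However, as a standalone proof your proposal has a genuine gap, and you say so yourself: the step where $w'\circ\Phi_1=w'\circ\Phi_2$ is upgraded to $\Phi_1=\Phi_2$ fails precisely when the images lie in (or limit onto) $SM'$, because the first integrals are constant along the real direction $\D'\cap\bar\D'=\C X'$ and therefore do not separate points on a geodesic orbit. This is not a peripheral degenerate case but the only case that matters in this paper: the maps to which the proposition is applied send $SM$ into $SM'$ (Proposition \ref{prop_hmaps}), so all the points at which agreement must first be propagated are exactly the points where your argument stops. Deferring that analysis to \cite{BMP24} is consistent with how the paper itself treats the statement (as a citation), but it means your write-up is an outline of why local integrability is the key input rather than a proof; the "separate, more careful analysis" you flag is the content of \cite[Corollary 1.7]{BMP24}, not a routine appendix to your sketch.
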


\subsection{Rad{\'o}-Kneser-Choquet theorem}\label{sec_RKC}

Consider the {\it Hardy space} \[\mathbb H= \{h\in L^2(\mathbb S^1): h_k=0, k<0\}\] of $L^2$-functions on $\mathbb S^1$ with vanishing negative Fourier modes. Any function $\psi\in C^\infty(\mathbb S^1)\cap \mathbb H$ has a unique holomorphic extension $\Psi\in C^\infty(\DD)$ and the Rad{\'o}--Kneser--Choquet theorem (see e.g.~\cite[Chapter 3.1]{Dur04}) asserts that
\begin{equation*}\label{RKC}
\psi \in \mathrm{Diff}(\mathbb S^1)\quad \Rightarrow \quad \Psi\in \mathrm{Diff}(\DD).
\end{equation*}
Here $\mathrm{Diff}(\cdot)$ denotes the diffeomorphisms of a manifold. From this we deduce a rigidity result:
\begin{lemma}\label{circlediff} Suppose $\psi \in \Diff(\mathbb S^1)$ satisfies the following properties:
\begin{enumerate}[label=\rm{(\roman*)}]
	\item\label{circlediff1} $\psi(1)=1$;
	\item\label{circlediff2} $\psi(-\mu)=-\psi(\mu)$ for all $\mu\in \mathbb S^1$;
	\item\label{circlediff3} $\psi^*(\mathbb H)\subset \mathbb H$.
\end{enumerate}
Then $\psi=\mathrm{Id}$.
\end{lemma}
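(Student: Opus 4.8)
The plan is to exploit the interplay between the three hypotheses and the Rad\'o--Kneser--Choquet theorem. Let $\Psi\in C^\infty(\DD)$ be the unique holomorphic extension of $\psi$; by RKC, condition (iii) (which says precisely that $\psi$ extends holomorphically, i.e.\ $\psi\in \mathbb H$) together with $\psi\in\Diff(\mathbb S^1)$ gives $\Psi\in\Diff(\DD)$. The antipodal symmetry (ii) forces the power series of $\Psi$ to contain only odd powers of $\mu$: indeed $\Psi(-\mu)$ and $-\Psi(\mu)$ are both holomorphic extensions of $-\psi(\mu)$, so by uniqueness $\Psi(-\mu)=-\Psi(\mu)$, killing all even Fourier/Taylor coefficients. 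Write $\Psi(\mu)=\sum_{k\ge 0} c_{2k+1}\mu^{2k+1}$.

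Next I would extract rigidity from the fact that $\Psi$ is a diffeomorphism of the closed disc fixing no interior structure beyond holomorphicity. Since $\Psi$ is holomorphic, its Jacobian is $|\Psi'|^2\ge 0$, and being a diffeomorphism it must be nonvanishing on all of $\DD$; in particular $\Psi'(0)=c_1\neq 0$ and $\Psi\colon \DD\to\DD$ is a holomorphic bijection of the closed disc, hence a biholomorphism $\DD^\circ\to\DD^\circ$ extending smoothly to the boundary. The biholomorphisms of the open unit disc are exactly the M\"obius transformations $\mu\mapsto e^{i\alpha}\frac{\mu-a}{1-\bar a\mu}$ with $|a|<1$; the smooth-up-to-boundary ones with image $\DD$ are again exactly these. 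The oddness $\Psi(-\mu)=-\Psi(\mu)$ of such a M\"obius map forces $a=0$ (compare Taylor coefficients, or note that an odd biholomorphism of the disc must fix $0$), so $\Psi(\mu)=e^{i\alpha}\mu$ for some $\alpha\in\R$. Finally hypothesis (i), $\psi(1)=1$, gives $e^{i\alpha}=1$, whence $\Psi=\mathrm{Id}$ and therefore $\psi=\Psi|_{\mathbb S^1}=\mathrm{Id}$.

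The only genuinely non-formal input is the classification of the boundary-smooth holomorphic bijections $\DD\to\DD$ as M\"obius transformations; I expect that to be the main point to get right, though it is standard (Schwarz lemma applied to $\Psi$ and to its inverse, which is automatically holomorphic since $\Psi$ is a biholomorphism of the open disc). Everything else --- the oddness propagating from the boundary to the interior via uniqueness of holomorphic extension, and the normalisation via $\psi(1)=1$ --- is a short symmetry argument. An alternative route avoiding M\"obius classification would be to apply the Schwarz lemma directly: from $|\Psi|\le 1$ on $\DD$ and $\Psi$ a bijection one gets that $\Psi$ and $\Psi^{-1}$ are both contractions fixing $0$ (once oddness gives $\Psi(0)=0$), forcing $|\Psi'(0)|=1$ and hence $\Psi$ a rotation; then (i) pins the rotation to the identity. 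I would present whichever of these two is shorter in the final writeup, likely the Schwarz lemma version.
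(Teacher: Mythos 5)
Your proof is correct and follows essentially the same route as the paper: apply \ref{circlediff3} to $f(\mu)=\mu$ to get $\psi\in\mathbb H$, invoke Rad\'o--Kneser--Choquet to see the holomorphic extension $\Psi$ is a disc automorphism, propagate the oddness \ref{circlediff2} into $\DD$ by uniqueness of the extension to force $\Psi(0)=0$, and then use \ref{circlediff1} to pin down the identity (the paper does this via the M\"obius classification, exactly as in your first variant; your Schwarz-lemma alternative is an equivalent shortcut). No gaps.
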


\begin{proof}
	Consider $\psi$ as function $\psi\colon \mathbb S^1\rightarrow \C$, then applying \ref{circlediff3} to $f(\mu)=\mu$ gives that $\psi\in \mathbb H$. Consequently, there is a holomorphic map $\Psi\colon \DD\rightarrow \C$ with $\Psi =\psi$ on $\mathbb S^1$. By the Rad{\'o}--Kneser--Choquet theorem, $\Psi\colon \DD\rightarrow \DD$ is a biholomorphism and thus there are  $(a,u)\in \DD^\circ\times \mathbb S^1$ such that  $\Psi(\mu)=u(\mu-a)/(1-\bar a\mu)$.
 Property  \ref{circlediff2} can be analytically continued to $\Psi(-\mu)=-\Psi(\mu)$ for  all $\mu \in \DD$ and thus $a=-\bar u \Psi(0)=0$. Further, $u=\Psi(1)=1$, hence $\psi=\mathrm{Id}$.
\end{proof}
The Rad{\'o}--Kneser--Choquet theorem can also be leveraged to  Riemannian surfaces $(M,g)$. Denote $\mathcal A(\partial M,g)\subset C^\infty(\partial M)$ the space of boundary values of $g$-holomorphic functions.

\begin{lemma}\label{rkcriemann}
	Suppose $M$ is diffeomorphic to a disk and $g_1$ and $g_2$ are two Riemannian metrics. Then any $\varphi\in \Diff(\partial M)$ with $\varphi^*\mathcal A(\partial M,g_2)\subset \mathcal A(\partial M,g_1)$ can be extended to a biholomorphism $\varphi\colon (M,g_1)\to (M,g_2)$.
\end{lemma}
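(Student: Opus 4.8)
The plan is to use the Riemann mapping theorem to reduce to the flat disk, and then invoke Lemma \ref{circlediff} after suitable normalisations. First I would pick biholomorphisms $\Psi_i\colon (M,g_i)\to \DD$ onto the standard disk (these exist since $M$ is simply connected with boundary, by the Riemann mapping theorem for surfaces with boundary — uniformise $(M,g_i)$ and then map the resulting simply connected domain to $\DD$); they are smooth up to the boundary because $\partial M$ is smooth. Composing, set $\psi := \Psi_2\circ\varphi\circ\Psi_1^{-1}\colon \mathbb S^1\to\mathbb S^1$, which is a diffeomorphism of the circle. The pullback hypothesis $\varphi^*\mathcal A(\partial M,g_2)\subset \mathcal A(\partial M,g_1)$ translates, via the $\Psi_i$, into $\psi^*\mathcal A(\mathbb S^1,g_{\mathrm{flat}})\subset \mathcal A(\mathbb S^1,g_{\mathrm{flat}})$, i.e.\ $\psi^*(\mathbb H)\subset\mathbb H$, since for the flat disk $\mathcal A(\partial M,g_{\mathrm{flat}})=C^\infty(\mathbb S^1)\cap\mathbb H$ — here I should be a little careful that $\mathcal A$ is defined up to smoothness (not all of $\mathbb H$), but since $\psi$ is a diffeomorphism it maps smooth functions to smooth functions, so this is harmless.

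The catch is that Lemma \ref{circlediff} also requires the normalisations $\psi(1)=1$ and $\psi(-\mu)=-\psi(\mu)$, which a generic $\psi$ does not satisfy; but I only need to extend $\varphi$ to \emph{some} biholomorphism, not to deduce $\varphi=\mathrm{Id}$, so in fact I do not need Lemma \ref{circlediff} at all — only the Rad\'o--Kneser--Choquet theorem. The cleaner route: apply RKC directly to $\psi$. Since $\psi\in\mathbb H\cap C^\infty(\mathbb S^1)$, it extends to a holomorphic $\tilde\psi\in C^\infty(\DD)$, and because $\psi\in\Diff(\mathbb S^1)$ the Rad\'o--Kneser--Choquet theorem gives that $\tilde\psi\colon\DD\to\DD$ is a diffeomorphism, holomorphic on the interior, hence a biholomorphism. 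Then define $\varphi := \Psi_2^{-1}\circ\tilde\psi\circ\Psi_1\colon (M,g_1)\to(M,g_2)$; this is a biholomorphism (composite of biholomorphisms) and restricts on $\partial M$ to $\Psi_2^{-1}\circ\psi\circ\Psi_1=\varphi$, so it is the desired extension.

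One point deserving care is the orientation: the Rad\'o--Kneser--Choquet theorem as stated produces a diffeomorphism of $\DD$, and one should check it is a biholomorphism (not an anti-biholomorphism) — this is automatic since $\tilde\psi$ is holomorphic on $\DD^\circ$ and a local diffeomorphism there, so its differential is $\C$-linear and invertible, hence orientation preserving. If $\varphi$ were orientation reversing on $\mathbb S^1$, then $\psi$ would be too, and $\psi\in\mathbb H$ would force $\tilde\psi$ to be a proper holomorphic self-map of $\DD$ that reverses boundary orientation — impossible for a diffeomorphism; so implicitly the hypothesis $\varphi^*\mathcal A(\partial M,g_2)\subset\mathcal A(\partial M,g_1)$ already forces $\varphi$ to be orientation preserving, which is consistent. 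The main obstacle, such as it is, is purely bookkeeping: making sure the Riemann maps $\Psi_i$ are smooth up to the boundary and that the identification $\mathcal A(\partial M,g_i)\leftrightarrow C^\infty(\mathbb S^1)\cap\mathbb H$ under $\Psi_i$ is correct; both are standard, the latter because a $g_i$-holomorphic function on $M$ is exactly the pullback under $\Psi_i$ of an ordinary holomorphic function on $\DD$.
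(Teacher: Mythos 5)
Your argument is correct and is essentially the paper's own proof: both reduce to the unit disk via Riemann mapping theorem uniformisations $\chi_k\colon(M,g_k)\to\DD$ smooth up to the boundary, translate the hypothesis into $\psi=\chi_2\circ\varphi\circ\chi_1^{-1}$ preserving $\mathbb H\cap C^\infty(\mathbb S^1)$ (hence $\psi\in\mathbb H$), apply the Rad\'o--Kneser--Choquet theorem to get an automorphism of $\DD$, and conjugate back. Your observation that Lemma~\ref{circlediff} is not needed here (only the RKC theorem itself) matches the paper, and your side remarks on orientation and boundary regularity are sound bookkeeping rather than gaps.
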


\begin{proof} By the Riemann mapping theorem, for $k=1,2$, there is a biholomorphism $\chi_k\colon (M,g_k)\to \DD$ which restricts to a diffeomorphism $\partial M \to \mathbb S^1$ satisfying $(\chi_k|_{\partial M})^*(\mathbb H\cap C^\infty(\mathbb S^1))= \mathcal A(\partial M,g_k)$. Thus the composition  $\psi =(\chi_2|_{\partial M})\circ \varphi\circ (\chi_1|_{\partial M})^{-1}$ belongs to $\Diff(\mathbb S^1)$ and preserves $\mathbb H\cap C^\infty(\mathbb S^1)$. Viewed as a $\C$-valued map, this ensures that $\psi\in \mathbb H$ and by the  Rad{\'o}--Kneser--Choquet theorem, $\psi$ extends to an automorphism $\Psi\in \mathrm{Aut}(\DD)$. The sought after extension of $\varphi$ is then given by $\chi_2^{-1} \circ \Psi\circ \chi_1\in \Diff(M)$.
\end{proof}

\subsection{A boundary determination lemma}\label{seclambda}In this subsection we assume that $\partial M\neq \emptyset$ and consider flows on $SM$ generated by the vector fields \[
F = X + \lambda V,\quad \lambda\in C^\infty(SM,\R).
\]
We assume that the flow $(\phi_t)=(\phi^\lambda_t)$ of $F$ is non-trapping, such that the scattering relation $\alpha_\lambda \colon \partial SM\to \partial SM$ is well-defined. We also assume that $\partial M$ is strictly $F$-convex, meaning that 
\[\Pi_{x}^{\lambda}(v,v):=\Pi_{x}(v,v)-\lambda(x,v)V(\mu)>0,\quad (x,v)\in \partial_0SM.\]
Here $\mu(x,v):=\langle \nu(x),v\rangle$  and $\Pi$  is the second fundamental form of $\partial M$. The scattering relation is then a diffeomorphism of $\partial SM$, given by
\[\alpha_{\lambda}(x,v)=\phi_{\te(x,v)}(x,v),\quad (x,v)\in \partial SM.\]
Here $\te\in C^\infty(SM)$  is a function that vanishes on $\partial_0SM$ and satisfies $F\te = -2$ on $SM$ (analogous to $\tilde \tau$ in \cite[Section 3.2]{PSU23}, see also \cite{JKR24}).  The following identity is easy to check and left as an exercise for the interested reader:
\begin{equation}\label{eq:notzero}
  \Pi_{x}^{\lambda}(v,v)V(\te)(x,v)=\pm 2,\quad (x,v)\in \partial_0SM
\end{equation}
In particular, $V\te\neq 0$ on $\partial_0SM$.

\begin{lemma}\label{lemma:lambda} Suppose $\alpha_{\lambda}=\alpha_{0}$ on $\partial SM$. Then \[
\lambda|_{\partial_0SM} \equiv 0 \quad \text{ and } \quad V\lambda|_{\partial_0SM} \equiv 0.\]  In particular, if $\lambda(x,v)=\theta_x(v)$ for a $1$-form $\theta$ on $M$, then  $\theta|_{\partial M}\equiv 0$.
\end{lemma}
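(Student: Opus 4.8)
The plan is to extract $\lambda|_{\partial_0SM}\equiv0$ and $V\lambda|_{\partial_0SM}\equiv0$ from the first‑ and second‑order behaviour of the identity $\alpha_\lambda=\alpha_0$ near the glancing region $\partial_0SM$, and then to deduce the statement about $\theta$ by homogeneity.

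\emph{First order.} Fix $p=(x,v)\in\partial_0SM$. Since $\te$ vanishes on $\partial_0SM$ one has $\alpha_\lambda(p)=\phi^\lambda_0(p)=p$, and differentiating the relation $\alpha_\lambda(q)=\phi^\lambda_{\te(q)}(q)$ in $q$ along $\partial SM$ (using $\partial_t\phi^\lambda_t|_{t=0}=F$ and $\phi^\lambda_0=\Id$) gives, on $T_p\partial SM$, the rank‑one perturbation
\[
	d(\alpha_\lambda)_p=\Id+F(p)\otimes(d\te)_p .
\]
Here $F(p)\in T_p\partial SM$ because $d\pi(X(p))=v\in T_x\partial M$, and $(d\te)_p(F(p))=(F\te)(p)=-2\neq0$, so the second summand has image exactly $\R F(p)$. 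The analogous identity for the geodesic flow (with its own function $\tilde\tau$, satisfying $X\tilde\tau=-2$) together with $\alpha_\lambda=\alpha_0$ forces $\R F(p)=\R X(p)$, i.e.\ $F(p)=cX(p)$ for some $c\neq0$. As $X(p)$ and $V(p)$ are linearly independent on $\partial_0SM$ (their images under $d\pi$ are $v\neq0$ and $0$), the identity $F=X+\lambda V$ yields $c=1$ and $\lambda(p)=0$. This proves the first assertion, and in particular $\partial_t\lambda=0$ along $\partial_0SM$ as well.

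\emph{Second order.} This is where I expect the real work to lie. Work in boundary normal coordinates $(r,t)$ near a point $x_0\in\partial M$ (so $g=dr^2+f(r,t)^2dt^2$, $f(0,t)\equiv1$, $r$ the distance to $\partial M$) and the induced coordinates $(r,t,\beta)$ on $SM$, where $\beta$ is the angle to the inward normal, $V=\partial_\beta$, and $X=\cos\beta\,\partial_r+\tfrac{\sin\beta}{f}\partial_t+b(r,t,\beta)\partial_\beta$ with $b(0,t_0,\tfrac\pi2)=\Pi_{x_0}(v_0,v_0)=:\Pi>0$. For $\beta_0=\tfrac\pi2-\epsilon$, $\epsilon>0$ small, the $F$‑trajectory through $(0,t_0,\beta_0)$ satisfies $r\approx\tfrac{\epsilon^2-(\beta-\pi/2)^2}{2\Pi}$, so it penetrates to depth $O(\epsilon^2)$, stays within $O(\epsilon)$ of $\partial_0SM$, and exits at $(0,t_1,\beta_1)$ with $\beta_1\approx\tfrac\pi2+\epsilon$. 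By the first step $\lambda$ and its tangential derivative vanish on $\partial_0SM$, hence along this trajectory $\lambda=(V\lambda)(x_0,v_0)\,(\beta-\tfrac\pi2)+O(\epsilon^2)$; thus $F$ is an $O(\epsilon)$ perturbation of $X$ there, the perturbative expansion is legitimate, and the only part of $\lambda$ affecting the exit data at order $\epsilon^2$ is this linear term. Integrating $\tfrac{dr}{d\beta}=\tfrac{\cos\beta}{b+\lambda}$ from entry to exit and using $\int_{-\epsilon}^{\epsilon}(\beta-\tfrac\pi2)^2\,d\beta=\tfrac23\epsilon^3$, I expect
\[
	\beta_1=\tfrac\pi2+\epsilon+\Big(c_g(t_0)+\tfrac{2}{3\Pi}\,(V\lambda)(x_0,v_0)\Big)\epsilon^2+O(\epsilon^3),
\]
with $c_g$ depending only on $g$. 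The same computation with $\lambda=0$ gives this formula without the $(V\lambda)$‑term; since $\alpha_\lambda=\alpha_0$ the exit directions, hence all coefficients of their $\epsilon$‑expansions, coincide, and the $\epsilon^2$‑coefficients give $(V\lambda)(x_0,v_0)=0$. Running this at every glancing direction on both components of $\partial_0SM$ yields $V\lambda|_{\partial_0SM}\equiv0$. The delicate points are: ensuring that higher powers of $\lambda$, and the $\lambda$‑induced perturbation of the trajectory, affect $\beta_1$ only at order $\epsilon^3$; checking that the $\epsilon^1$‑coefficient of $\beta_1$ is exactly $1$ and the $\epsilon^2$‑coefficient splits as a $g$‑only part plus $\tfrac{2}{3\Pi}(V\lambda)$; and selecting an exit datum whose $\epsilon^2$‑coefficient genuinely isolates $V\lambda$ (either $\beta_1$ or $t_1-t_0$ should work).

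\emph{The $1$‑form case.} If $\lambda(x,v)=\theta_x(v)$, then $\lambda|_{\partial_0SM}\equiv0$ forces $\theta_x|_{T_x\partial M}=0$ for every $x\in\partial M$ (by homogeneity in $v$), while $V\lambda(x,v)=\theta_x(v^\perp)$ together with $V\lambda|_{\partial_0SM}\equiv0$ gives $\theta_x(\nu(x))=0$. Hence $\theta_x=0$ on all of $T_xM$ for every $x\in\partial M$, i.e.\ $\theta|_{\partial M}\equiv0$.
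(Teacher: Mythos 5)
Your first-order step and the $1$-form step are correct and essentially coincide with the paper's argument: at a glancing point $\tilde\tau_\lambda=0$, so $d\alpha_\lambda=\Id+F\otimes d\tilde\tau_\lambda$ on $T(\partial SM)$, and comparing the two rank-one parts (nonzero since $F\tilde\tau_\lambda=-2$) forces $\R F=\R X$, i.e.\ $\lambda|_{\partial_0SM}=0$; likewise $\theta_x(v)=\lambda(x,v)$ and $\theta_x(v^\perp)=\pm V\lambda(x,v)$ give $\theta|_{\partial M}=0$ once both vanish.

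The second-order step, which is the heart of the lemma, is however only a plan, not a proof: you say yourself that the expansion $\beta_1=\tfrac\pi2+\epsilon+\big(c_g+\tfrac{2}{3\Pi}V\lambda\big)\epsilon^2+O(\epsilon^3)$ is what you \emph{expect}, and the points you list as ``delicate'' are exactly the ones on which the argument stands or falls (that the $\lambda$-induced perturbation of the trajectory and the higher Taylor terms of $\lambda$ only enter at order $\epsilon^3$, that the $\epsilon^2$-coefficient splits into a $g$-only part plus a nonzero multiple of $V\lambda$, and that the exit data are smooth in $\epsilon$ at the glancing so that coefficients may be equated). So, as written, $V\lambda|_{\partial_0SM}=0$ is not established. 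For what it is worth, the plan is viable: with $u=\beta-\tfrac\pi2$ one has $\dot u=\Pi+(V\lambda)u+O(\epsilon^2)$ and $\dot r=-u+O(\epsilon^3)$, the neglected terms are indeed $O(\epsilon^3)$ because $r=O(\epsilon^2)$ and the tangential derivatives of $\lambda$ along $\partial_0SM$ vanish by your first step, and solving gives precisely the coefficient $\tfrac{2}{3\Pi}V\lambda$ you predict. The paper's route avoids all this asymptotic bookkeeping: it writes $d\phi^\lambda_t(V)=x(t)F+y(t)H+z(t)V$, derives the exact linear system $\dot x=\lambda y$, $\dot y=z$, $\dot z=V(\lambda)\dot y-\kappa y$, and then differentiates the identities $\langle d\alpha_\lambda(V),X\rangle=\langle d\alpha_0(V),X\rangle$ once and $\langle d\alpha_\lambda(V),H\rangle=\langle d\alpha_0(V),H\rangle$ twice along $V$, evaluating at the glancing; the initial conditions $b_\lambda(\cdot,0)=0$, $\dot b_\lambda(\cdot,0)=1$, $\ddot b_\lambda(\cdot,0)=V\lambda$ then yield $V\lambda=0$ exactly, with the nonvanishing of $V\tilde\tau_\lambda$ from \eqref{eq:notzero} playing the role of your $\Pi>0$. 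To complete your version you would have to prove the expansion and error estimates you flagged; the exact variational computation is the shorter way to do so.
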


\begin{proof}
Since $\alpha_{\lambda}=\phi_{\te}$, for all $(x,v)\in \partial SM$ it holds that
\begin{equation}\label{chainrulealpha} d\alpha_{\lambda}(\xi)=d\te(\xi) F\circ\alpha_{\lambda}+d\phi_{\te}(\xi),\quad \xi\in T_{(x,v)}\partial SM.\end{equation}
For $(x,v)\in \partial_0SM$ this implies that $\R F(x,v)$ is an eigenspace of $d\alpha_\lambda$ with eigenvalue $-1$. Further, $T(\partial_0SM)$ is eigenspace with eigenvalue $+1$, hence:
\[
	\alpha_\lambda=\alpha_0\quad \Rightarrow \quad \R F(x,v)=\R X(x,v)\quad \Rightarrow \quad \lambda(x,v)=0.
\]

To access higher order derivatives, let $\xi\in T_{(x,v)} SM$ and write $d\phi_t(\xi) = x(t) F + y(t) H + z(t) V$. For a given initial datum $\xi$ the behaviour of the coefficients is governed by the system of ODE 
\begin{align}
\dot{x} &=\lambda\,y;\label{jac1}\\
\dot{y} &=z;\label{jac2}\\
\dot{z} &=V(\lambda)\dot{y}-\kappa y,\label{jac3}
\end{align}
where $\kappa:=K_g-H\lambda+\lambda^2$ and $K_g$ is the Gaussian curvature (one can easily derive these equations using an argument analogous to that of \cite[Proposition 3.7.8]{PSU23}). We now specify to $\xi = V(x,v)$  and denote the first two coefficients by
\[
	a_\lambda(x,v,t) = \langle d\phi_t(V(x,v)),X\rangle\quad \text{ and } \quad b_\lambda(x,v,t) = \langle d\phi_t(V(x,v)),H\rangle.
\]
Then by \eqref{chainrulealpha}, we have for all $(x,v)\in \partial SM$ that
\begin{align*}
	f_\lambda(x,v):=\langle d\alpha_\lambda(V(x,v)),X\rangle  &= V\te(x,v) + a_\lambda(x,v,\te(x,v)), \\
	g_\lambda(x,v):=\langle d\alpha_\lambda(V(x,v)),H\rangle  &=b_\lambda(x,v,\te). 
\end{align*}
These must agree with the corresponding expressions for $\alpha_0$, denoted with $f_0$ and $g_0$, respectively. Applying $V$ to the identity $f_\lambda  = f_0$ yields \[
	(V^2\te - V^2\tilde \tau_0)(x,v)= (Va_\lambda)(x,v,\te(x,v)) + \dot a_\lambda(x,v,\te(x,v)) (V\te(x,v)).
\]
As $a_\lambda(x,v,0)=\dot a_\lambda(x,v,0) = 0$ for all $(x,v)\in \partial SM$, restricting to the glancing region, where $\te(x,v)=0$, we obtain \[
V^2 \te(x,v) = V^2 \tilde \tau_0(x,v), \quad (x,v)\in \partial_0SM.\]
Next, applying $V$ twice to $g_\lambda$ yields (with $(x,v)$ suppressed from the notation):
\[
	V^2g_\lambda = V^2b_\lambda(\te) + \big(2V\dot b_\lambda (\te) + V\te\cdot  \ddot b_\lambda(\te) \big) V\te  + \dot b_\lambda(\te) V^2\te.
\]
Again, as $b_\lambda(x,v,0) = 0$ and $\dot b_\lambda(x,v,0)=1$  on $\partial SM$, all but two terms disappear at the glancing  and we obtain:
\[
	V^2g_\lambda(x,v) =\big( V\te(x,v)\big)^2 \ddot b_\lambda(x,v,0)  + V^2\te(x,v),\quad (x,v)\in \partial_0SM.
\]
Since $g_\lambda = g_0$ on $\partial SM$ and $V\te = V\tilde \tau_0\neq 0$ (by virtue of \eqref{eq:notzero}) and $V^2\te = V^2 \tilde \tau_0$ on $\partial_0SM$, we deduce
\[
	\ddot b_\lambda(x,v,0) = \ddot b_0(x,v,0),\quad (x,v)\in \partial_0SM.
\]	
Finally, using \eqref{jac2} and \eqref{jac3} to compute the second derivative, this gives $V\lambda(x,v) = 0$ for $(x,v)\in \partial_0SM$, as desired.

For the assertion about $1$-forms note that for $(x,v)\in \partial_0SM$ we have $\theta_x(v) = \lambda(x,v) = 0$ and $\theta_x(v^\perp) =- V\lambda(x,v)=0$ and thus $\theta_x =0$ by linearity.
\end{proof}

\section{Three general instances of rigidity}\label{section:general}

In this section we discuss when diffeomorphisms preserving the canonical contact 1-form, geodesic equivalences and time changes extend as biholomorphisms. 

\subsection{Diffeomorphisms preserving the contact 1-form}  Let $g_k$ ($k=1,2$) be two metrics on $M$ and consider $SM_k$ as a contact manifold with its canonical contact $1$-form $\Alpha_k$.

\begin{proposition} 
 Let $\phi:SM_{1}\to SM_{2}$ be a diffeomorphism such that $\phi^*\Alpha_{2}=\Alpha_{1}$. Then $\phi$ extends to a biholomorphism  $Z_{g_{1}} \to Z_{g_{2}}$ if and only if it is the lift of an orientation preserving isometry $\varphi\colon(M,g_{1})\to (M,g_{2})$.
\end{proposition}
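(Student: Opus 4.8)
\medskip

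One direction is immediate: if $\phi=\varphi_\sharp$ for an orientation preserving isometry $\varphi\colon(M,g_1)\to(M,g_2)$, then $\varphi_\sharp$ is a biholomorphism $Z_1\to Z_2$ (as recalled in the introduction), and since the canonical contact form is natural under isometries one has $\varphi_\sharp^{*}\Alpha_2=\Alpha_1$, so $\phi$ is of the required type. Conversely, suppose $\phi$ extends to a biholomorphism $\Phi\colon Z_1\to Z_2$. Recall that the canonical contact form has $X$ as Reeb field, so $\Alpha_k(X_k)=1$ and $\Alpha_k(V_k)=0$. By Proposition~\ref{prop_hmaps}\ref{hmaps1} we may write $\phi_*X_1=cX_2$ and $\phi_*V_1=c'X_2+bV_2$ with $c,b$ nowhere zero; evaluating $\phi^{*}\Alpha_2=\Alpha_1$ on $X_1$ and on $V_1$ forces $c\equiv1$ and $c'\equiv0$, hence $\phi_*X_1=X_2$ and $\phi_*V_1=bV_2$. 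Thus $\phi$ intertwines the two geodesic flows and carries the orbits of the vertical flow, i.e.\ the fibres $S_xM_1$, onto fibres of $SM_2$; therefore $\phi$ descends to a diffeomorphism $\varphi\colon M\to M$ with $\pi_2\circ\phi=\varphi\circ\pi_1$, and $\phi(x,v)=(\varphi(x),\Psi_x(v))$ for circle diffeomorphisms $\Psi_x\colon S_xM_1\to S_{\varphi(x)}M_2$.

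\medskip

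Next I would unwind $\phi^{*}\Alpha_2=\Alpha_1$ using $\Alpha_{k,(x,v)}(\xi)=g_k\big(v,d\pi_k(\xi)\big)$. Since $d\pi_2\circ d\phi=d\varphi\circ d\pi_1$ and $d\pi_1$ is onto, this gives
\[
	g_2\big(\Psi_x(v),\,d\varphi_x w\big)=g_1(v,w)\qquad\text{for all }(x,v)\in SM_1,\ w\in T_xM .
\]
Writing $A=d\varphi_x$ and $A^{\dagger}$ for its metric adjoint, this reads $A^{\dagger}\Psi_x(v)=v$, so $\Psi_x$ is the restriction to $S_xM_1$ of the linear map $(A^{\dagger})^{-1}$. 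A linear map taking the unit circle bijectively onto the unit circle is a linear isometry, so $d\varphi_x$ is a linear isometry $(T_xM,g_1)\to(T_{\varphi(x)}M,g_2)$ for every $x$; hence $\varphi$ is a Riemannian isometry, and since $A^{\dagger}=A^{-1}$ for an isometry we obtain $\Psi_x=d\varphi_x|_{S_xM_1}$, i.e.\ $\phi=\varphi_\sharp$ on $SM_1$.

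\medskip

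The remaining point—that $\varphi$ is orientation preserving—is, I expect, the crux, since everything above is equally consistent with $b\equiv-1$, i.e.\ with $\varphi$ orientation reversing; all the conditions from Proposition~\ref{prop_hmaps} are tangential to $SM$, whereas the orientation must be read off transversally. The plan is to exploit Proposition~\ref{prop_hmaps}\ref{hmaps2}: the map $\Phi(x,\cdot)\colon Z_x\to Z_2$ is holomorphic and sends $\partial Z_x=S_xM_1$ onto the boundary circle of the fibre $Z_{2,\varphi(x)}$. I would first show $\Phi(x,\cdot)$ maps all of $Z_x$ into $Z_{2,\varphi(x)}$: near any $p\in\partial Z_{2,\varphi(x)}$ the local integrability of $\D$ from \cite{BMP24} furnishes holomorphic coordinates in which $Z_{2,\varphi(x)}$ is a slice $\{\zeta_1=0\}$, so $\zeta_1\circ\Phi(x,\cdot)$ is holomorphic and vanishes on an open boundary arc of $Z_x$, hence vanishes identically by boundary uniqueness; since the preimage of the closed complex curve $Z_{2,\varphi(x)}$ is a closed analytic subset of $Z_x^{\circ}$ containing a neighbourhood of $\partial Z_x$, it is all of $Z_x$, and running the same argument for $\Phi^{-1}$ gives $\Phi(Z_x)=Z_{2,\varphi(x)}$. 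Then $\Phi(x,\cdot)\colon Z_x\to Z_{2,\varphi(x)}$ is a biholomorphism of discs, in particular orientation preserving, so its boundary restriction $v\mapsto d\varphi_x v$ forces $d\varphi_x$—hence $\varphi$—to be orientation preserving. Finally, $\varphi_\sharp\colon Z_1\to Z_2$ is a holomorphic map agreeing with $\Phi$ on $SM_1$, so $\Phi=\varphi_\sharp$ by the identity principle (Proposition~\ref{prop_identityprinciple}); in particular $\phi=\varphi_\sharp$ is the lift of the orientation preserving isometry $\varphi$, as claimed.
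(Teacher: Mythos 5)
Your argument coincides with the paper's own proof up to and including the identification of $\phi$ as the lift of an isometry: using $\Alpha_1(V_1)=0$ together with Proposition~\ref{prop_hmaps}\ref{hmaps1} to kill the $X_2$-component of $\phi_*V_1$, deducing that $\phi$ preserves the vertical fibres and descends to $\varphi\in\Diff(M)$, and then unwinding $\phi^*\Alpha_2=\Alpha_1$ to get $\Psi_x=((d\varphi_x)^*)^{-1}$, whence $d\varphi_x$ is a linear isometry and $\Psi_x=d\varphi_x$. This is exactly the computation in the paper, and it is correct.

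The divergence is the last point, that $\varphi$ preserves orientation, which the paper disposes of in one sentence and you attempt to prove by showing that $\Phi$ maps fibres to fibres. The scheme is reasonable, but its key technical claim is glossed: the local integrability of $\D$ from \cite{BMP24} gives local first integrals near a point $p\in SM_2$, yet these are constant along the geodesic vector field, so their differentials degenerate on $SM_2$ and they do not automatically furnish ``holomorphic coordinates in which $Z_{2,\varphi(x)}$ is the slice $\{\zeta_1=0\}$''; moreover, even granted a local holomorphic function vanishing on the fibre (in the flat model $\zeta_1=z-\mu^2\bar z$ works), its zero set also contains the geodesic through $p$ inside $SM_2$, and your step ``the preimage contains a neighbourhood of $\partial Z_x$'' needs the \emph{interior} part of $\{\zeta_1=0\}$ to coincide with the fibre near $p$, which you never verify. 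Two cleaner routes: (a) fibre preservation follows as in Step~3 of the proof of Proposition~\ref{prop_geodeq} --- since $\phi=\varphi_\sharp|_{SM_1}$ commutes with the antipodal map, $a\circ\Phi\circ a=\Phi$ by Proposition~\ref{prop_identityprinciple}, so $\Phi$ descends to projective twistor space and there agrees with the obvious fibre-preserving extension of $\phi$; (b) avoid fibre preservation altogether: by Proposition~\ref{prop_hmaps}\ref{hmaps2}, $d\Phi$ maps $T^{0,1}_vZ_{1,x}$, spanned by $V_{1,\perp}\pm iV_1$, into $\D_2=\C X_2\oplus T^{0,1}Z_{2,\varphi(x)}$ at points of $SM_2$; writing $\phi_*V_1=bV_2$ with $b=\pm1$, comparing real and imaginary parts gives $d\Phi(V_{1,\perp})=\alpha X_2+bV_{2,\perp}$ with $\alpha$ real, and since $\Phi$ maps $Z_1$ into $Z_2$ the outward normal must go to an outward-pointing vector, forcing $b=+1$, i.e.\ $d\varphi_x$ commutes with rotation by $\pi/2$ and $\varphi$ is orientation preserving.
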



\begin{proof} The lift of an orientation preserving isometry always extends as a biholomorphism between twistor spaces and it preserves contact $1$-forms.

Assume now that $\phi$ extends as a biholomorphism. By Proposition \ref{prop_hmaps}, there are functions $a,b\in C^\infty(SM_2,\R)$ such that
$\phi_{*}(V_{1})=aX_{2}+bV_{2}$. Since $\phi$ preserves the contact $1$-form, $\Alpha_{2}(\phi_{*}(V_{1}))=\Alpha_{1}(V_{1})=0$.
Thus $a=0$ and $\phi$ preserves the vertical fibres. This means that we can write $\phi$ as
\[\phi(x,v)=(\varphi(x),\psi_{x}(v)),\]
where $\varphi\colon M\to M$ is a diffeomorphism and $\psi_{x}\in\text{Diff}(S_{x}M_{1},S_{\varphi(x)}M_{2})$.
We now use that $\phi^*\Alpha_{2}=\Alpha_{1}$ by writing
\begin{align*}
(\phi^*\Alpha_{2})_{(x,v)}(\xi)&=\langle d\pi_{2}\circ d\phi_{(x,v)}(\xi),\psi_{x}(v)\rangle_{2}\\
&=\langle d\varphi_{x}\circ d\pi_{1}(\xi),\psi_{x}(v)\rangle_{2}\\
&=\langle d\pi_{1}(\xi),v\rangle_{1}=(\Alpha_{1})_{(x,v)}(\xi),
\end{align*}
where $\xi\in T_{(x,v)}SM_{1}$.
Since any $u\in T_{x}M$ can be written as $d\pi_{1}(\xi)$ for $\xi\in T_{(x,v)}SM_{1}$ we have
\[\langle d\varphi_{x}(u),\psi_{x}(v)\rangle_{2}=\langle u,v\rangle_{1} \text{ for all}\; u\in T_{x}M.\]
This yields
\[\psi_{x}(v)=((d\varphi_{x})^*)^{-1}(v).\]
Since $\psi_{x}(v)$ has norm one it follows that $d\varphi_{x}$ is an isometry and $\psi_{x}(v)=d\varphi_{x}(v)$ as desired.
Since $\phi$ extends as a biholomorphism, $\varphi$ must be orientation preserving.
\end{proof}

\subsection{Geodesic equivalences}
We say that two metrics $g_1$ and $g_2$ are {\it geodesically equivalent}, if the scaling map 
\[\phi:SM_{1}\to SM_{2}\]
given by $\phi(x,v)=(x,v/|v|_{2})$ satisfies
$\phi_{*}(X_{1})=aX_{2}$, where $a$ is a (positive) smooth function. Since $\phi$ preserves the vertical fibration
it is a candidate to extend as a biholomorphism of transport twistor space. However, we show next that if that is the case
then $g_1$ is a constant multiple of $g_2$.

\begin{proposition}\label{prop_geodeq} Suppose that $g_1$ and $g_2$ are geodesically equivalent and that the scaling map $\phi$ extends as a biholomorphism between transport twistor spaces. Then $g_1$ is a constant multiple of $g_2$.
\end{proposition}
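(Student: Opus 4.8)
The plan is to exploit the two structural constraints that a biholomorphic extension of the scaling map $\phi$ must satisfy, in the spirit of Proposition~\ref{prop_geodeq}'s hypotheses: first, that $\phi$ preserves the vertical fibration (so it has the shape $\phi(x,v) = (x, v/|v|_2)$ with $\varphi = \mathrm{Id}$ on the base), and second, that $\phi_*X_1 \in \R X_2$ by Proposition~\ref{prop_hmaps}\ref{hmaps1}, i.e. $\phi_*X_1 = aX_2$ for a positive smooth function $a$ on $SM_2$. The key extra input coming from holomorphicity, beyond mere geodesic equivalence, is the second half of Proposition~\ref{prop_hmaps}\ref{hmaps1}: $\phi_*V_1 \in \R X_2 \oplus \R V_2$. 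Since $\phi$ preserves vertical fibres, $\phi_* V_1$ is automatically tangent to the fibres of $SM_2 \to M$, hence proportional to $V_2$; combined with the holomorphicity constraint this is no new information, so the real content must be extracted differently. Instead I would track how the frame $\{X_1, H_1, V_1\}$ pushes forward and use the structure equations (the commutator relations $[X,V]=-H$, $[X,H]=KV$, $[V,H]=X$ on each $SM_k$, where $K$ is the respective Gaussian curvature) to derive an overdetermined system on the conformal factor.

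Concretely, write $g_2 = e^{2\rho} g_1$ for a smooth function $\rho$ on $M$ (the scaling map being a fibrewise radial rescaling forces $g_1$ and $g_2$ to be conformal once one knows $\varphi = \mathrm{Id}$; alternatively this is classical for geodesically equivalent surface metrics sharing the underlying conformal-to-nothing structure — but more carefully, geodesic equivalence of surface metrics does not force conformality, so I would instead directly use that $\phi$ fixes the base pointwise and rescales each fibre isotropically, which *does* force $g_2 = e^{2\rho}g_1$). In the frame adapted to $g_1$ one computes $\phi_* X_1$, $\phi_* H_1$, $\phi_* V_1$ in terms of $X_2, H_2, V_2$ and $\rho$, its horizontal derivatives, and the fibre variable; pushing forward the bracket relation $[V_1, X_1] = H_1$ and matching components against the $g_2$-structure equations yields PDEs for $\rho$. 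The function $a$ in $\phi_* X_1 = aX_2$ will turn out to equal $e^{-\rho}$ (up to the obvious normalization of the time change along geodesics), and pushing forward $[X_1, V_1]$ and $[X_1,H_1]$ then forces the horizontal gradient of $\rho$ to vanish, i.e. $d\rho = 0$, so $\rho$ is constant and $g_2 = e^{2\rho}g_1$ is a constant multiple of $g_1$.

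An alternative and probably cleaner route, which I would pursue in parallel, is to invoke the identity principle (Proposition~\ref{prop_identityprinciple}) together with Proposition~\ref{prop_hmaps}\ref{hmaps2}: the map $\Phi(x,\cdot)\colon Z_x \to Z'$ is holomorphic on each fibre. Since $\phi$ fixes the base and acts on the unit circle $S_xM$ by $v \mapsto v/|v|_2$, in the complex coordinate $\mu$ on the fibre (induced by $g_1$) this fibre map is a diffeomorphism of $\mathbb{S}^1$ that must — by fibrewise holomorphicity of $\Phi$ and the fact that $\Phi$ maps into $Z'$ respecting its fibre complex structure — extend holomorphically to the disc and land in the $g_2$-fibre disc; the change-of-complex-structure between the $g_1$- and $g_2$-disc coordinates on the same fibre is governed by the conformal factor $e^{\rho(x)}$, and Lemma~\ref{circlediff}-type rigidity then pins the fibre map down unless the two complex structures on $T_xM$ coincide, which again forces $\rho$ constant. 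The main obstacle I anticipate is the bookkeeping in the first approach: correctly expressing $\phi_*H_1$ requires differentiating the normalization $v \mapsto v/|v|_2$ along horizontal directions, which brings in $d\rho$ with fibre-dependent coefficients, and one must be careful that the resulting identities are genuinely overdetermined (holding for all $v$ in the fibre) rather than just at one point — it is exactly this "for all $v$" that converts a single PDE into the vanishing of $d\rho$. I would therefore lead with the structure-equation computation and use the identity-principle argument as a cross-check.
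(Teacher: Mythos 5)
There is a genuine gap, in fact two. First, your claim that because $\phi$ fixes the base and ``rescales each fibre isotropically'' one gets $g_2=e^{2\rho}g_1$ is circular: the scaling map $(x,v)\mapsto (x,v/|v|_2)$ exists for \emph{any} pair of metrics, and the rescaling factor $1/|v|_2$ is isotropic in $v$ precisely when the metrics are conformal at $x$ --- which is exactly what has to be proved. Your lead computation (pushing forward the frame and the structure equations) is therefore built on an unproven hypothesis, and in any case once conformality is known the constancy of the factor follows from geodesic equivalence alone (comparison of Levi--Civita connections), with no need for holomorphicity. The actual role of holomorphicity is to produce conformality: restricted to a fibre, the boundary map is $\mu\mapsto (a\mu+b\bar\mu)/|a\mu+b\bar\mu|$, where $a,b$ are the complex-linear and anti-linear parts of $\mathrm{Id}\colon (T_xM,g_1)\to(T_xM,g_2)$, and one shows (an elementary argument with the zeros and poles of the meromorphic extension of its square) that such a circle map extends holomorphically to the disc only if $b=0$. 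Your ``alternative route'' gestures at this but does not identify the statement to be proved, and a Lemma~\ref{circlediff}-type rigidity (which requires $\psi(1)=1$, oddness, and preservation of the Hardy space) is not the right tool here.

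Second, your fibrewise argument tacitly assumes that the holomorphic extension $\Phi$ maps the fibre $Z_{1,x}$ into the fibre $Z_{2,x}$ (``land in the $g_2$-fibre disc''). Proposition~\ref{prop_hmaps}\ref{hmaps2} only gives holomorphicity of $\Phi(x,\cdot)\colon Z_{1,x}\to Z_2$; knowing that the boundary circle is sent into $S_xM_2$ does not by itself confine the image of the whole disc to $Z_{2,x}$, and there is no cheap maximum-principle shortcut because the projection $\pi\colon Z_2\to M$ is \emph{not} holomorphic for the transport structure (in the model, $dz(\partial_{\bar z}+\mu^2\partial_z)=\mu^2\neq 0$). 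Fibre preservation is a separate step: one checks $a\circ\Phi\circ a=\Phi$ via the identity principle (Proposition~\ref{prop_identityprinciple}), descends $\Phi$ to the projective twistor space, and applies the identity principle there to identify the descended map with the obvious fibre-preserving extension of $\phi$; only then does the fibrewise disc argument apply. Without both of these ingredients --- conformality from the fibrewise extension problem, and fibre preservation from the projective quotient --- your outline does not close.
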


\begin{lemma}\label{lem_locrigid}
Let $a,b,\in \C$ and define $f\colon  \mathbb S^1\to \mathbb S^1$ by $\displaystyle f(\mu)={(a\mu + b\bar \mu)}/{|a\mu + b\bar \mu|}$. Then $f$ extends to a holomorphic map $\DD\to \DD$ if and only if $b=0$.
\end{lemma}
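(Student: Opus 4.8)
The plan is to dispose of the ``if'' direction in one line and to prove ``only if'' by squaring $f$, clearing denominators with the maximum principle, and exploiting the evident parity of $f$. For the easy direction: if $b=0$ then $a\neq 0$ (else $f$ is not even defined) and $f(\mu)=(a/|a|)\mu$, the restriction to $\mathbb{S}^1$ of the rotation $\mu\mapsto(a/|a|)\mu\in\mathrm{Aut}(\DD)$. So assume now that $f=F|_{\mathbb{S}^1}$ for some $F$ holomorphic on $\DD^\circ$ and continuous up to $\mathbb{S}^1$, and let us deduce $b=0$.

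First I would record two elementary facts. \emph{Parity:} straight from the formula, $f(-\mu)=-f(\mu)$ for all $\mu\in\mathbb{S}^1$. \emph{Algebraic identity:} since $\bar\mu=\mu^{-1}$ on $\mathbb{S}^1$, one has
\[
	f(\mu)^2=\frac{(a\mu+b\bar\mu)^2}{(a\mu+b\bar\mu)\,\overline{(a\mu+b\bar\mu)}}=\frac{a\mu+b\bar\mu}{\bar a\bar\mu+\bar b\mu}=\frac{a\mu^2+b}{\bar a+\bar b\mu^2},
\]
the last step obtained by multiplying numerator and denominator by $\mu$. Observe that $\bar a+\bar b\mu^2$ has modulus $|a\mu+b\bar\mu|$ and hence is non-vanishing on $\mathbb{S}^1$ precisely because $f$ is well defined.

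The main step: the maps $\mu\mapsto(\bar a+\bar b\mu^2)F(\mu)^2$ and $\mu\mapsto a\mu^2+b$ are holomorphic on $\DD^\circ$, continuous on $\DD$, and agree on $\mathbb{S}^1$ by the identity above, so the maximum principle applied to their difference (which vanishes on $\partial\DD$) gives
\[
	(\bar a+\bar b\mu^2)\,F(\mu)^2=a\mu^2+b\qquad\text{on all of }\DD.
\]
Independently, $\mu\mapsto F(\mu)+F(-\mu)$ is holomorphic on $\DD^\circ$, continuous on $\DD$, and vanishes on $\mathbb{S}^1$ by parity, hence vanishes identically; so $F$ is odd and $F(0)=0$. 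Evaluating the previous display at $\mu=0$ yields $\bar a\,F(0)^2=b$, i.e.\ $b=0$.

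I do not expect a real obstacle here; the argument is essentially a two-line computation once it is set up. The only points needing care are the non-vanishing of $\bar a+\bar b\mu^2$ on $\mathbb{S}^1$ (so that the algebraic identity is a genuine equality of continuous boundary functions) and having enough boundary regularity of the extension to apply the maximum principle. As a cross-check one can argue more classically: $|F|\equiv 1$ on $\partial\DD$ makes $F$ a finite Blaschke product, whose degree equals the winding number $\operatorname{sgn}(|a|^2-|b|^2)$ of $f$; non-negativity of that degree forces $|a|>|b|$ and degree one, so $F\in\mathrm{Aut}(\DD)$; parity then gives $F(0)=0$, i.e.\ $F(\mu)=c\mu$ with $|c|=1$, and comparing with the formula for $f$ on $\mathbb{S}^1$ once more forces $b=0$.
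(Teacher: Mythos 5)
Your proof is correct, and while it shares the paper's opening move --- squaring $f$ to eliminate the modulus and obtain the rational boundary identity $f(\mu)^2=(a\mu^2+b)/(\bar a+\bar b\mu^2)$ on $\mathbb S^1$ --- the way you extract $b=0$ is genuinely different. The paper (after normalising $a=1$) views $(\mu^2+b)/(\bar b\mu^2+1)$ as a meromorphic extension of $F^2$, locates its poles and zeroes at moduli $|b|^{-1/2}$ and $|b|^{1/2}$, and concludes because the square of a holomorphic function can only vanish to even order, while the rational function has simple zeroes inside $\DD$ unless $b=0$; it never uses the oddness of $f$. You instead clear denominators, promote the boundary identity $(\bar a+\bar b\mu^2)F(\mu)^2=a\mu^2+b$ to all of $\DD$ by the maximum principle, use parity (again via the maximum principle) to force $F$ odd and hence $F(0)=0$, and evaluate at the origin. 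Your route is slightly more elementary: it avoids locating poles and counting zero orders, needs only continuity of $F$ up to $\partial\DD$ (harmless either way, since holomorphic maps in the paper are smooth up to the boundary), and dispenses with the implicit normalisation $a\neq 0$. The Blaschke-product cross-check you sketch is also sound --- note $|a|\neq|b|$ is automatic from well-definedness of $f$ --- and is essentially a degree-theoretic rephrasing of the same rigidity, close in spirit to the paper's pole/zero count.
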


\begin{proof}
	Assume without loss of generality that $a=1$, then for $\mu\in \mathbb S^1$ we have $f(\mu)^2  = {(\mu^2+b)}/{(\bar b \mu ^2 + 1)}$. This has a meromorphic extension to $\C\cup\{\infty\}$ with two poles of modulus $|b|^{-1/2}$ and two zeroes of modulus $|b|^{1/2}$, both of order $1$, unless $b=0$. If $f$ extends holomorphically to $\DD$, the poles must lie outside of $\DD$, which enforces $|b|<1$. Further, the zeroes of the square of a holomorphic function must be of order $2$, hence $b=0$.
\end{proof}

\begin{proof}[Proof of Proposition \ref{prop_geodeq}] 

\noindent{\it Step 1 (fixed conformal class)}: If $g_2=e^{2\sigma}g_1$ for some $\sigma\in C^\infty(M,\R)$, then the Levi-Civita connections are related by
\[
	\nabla_\xi^{g_2}\xi - \nabla^{g_1}_\xi \xi  = - g_1(\xi,\xi)\,\mathrm{grad}_{g_1}\sigma +2 (d\sigma(\xi)) \xi,\quad \xi\in C^\infty(M,TM).
\]
Being geodesically equivalent forces  this expression to be a multiple of $\xi$, which implies $d\sigma(\xi_\perp)=0$ (see \cite{Met21} for details). Hence $\sigma$ is constant, as desired.

\smallskip
\noindent{\it Step 2 (fixed fibre)}: Assume $\Phi$ is a holomorphic extension of $\phi$ that sends fibres into fibres.
 For fixed $x\in M$, we then get a holomorphic map $\Phi(x,\cdot)\colon Z_{1,x}\to Z_{2,x}$ between disks that is of the form $v\mapsto v/|v|_{g_2}$ at the boundary $\partial Z_{1,x}$. Lemma \ref{lem_locrigid} thus implies that $\Id\colon (T_xM,g_1)\to (T_xM,g_2)$ is conformal. Hence the two metrics lie in the same conformal class, and Step 1 applies.
 
 \smallskip
\noindent{\it Step 3 (general case)}: We will reduce to Step 2 by showing that every holomorphic extension preserves fibres. For this we use the {\it projective} twistor space $Z_{\mathbb{P}}$ obtained as the quotient of $Z$ by the antipodal map $a$. This is another (degenerate) complex surface such that the projection map $Z\to Z_{\mathbb{P}}$ is a 2--1 branched cover over the zero section of $Z$ (see Remark \ref{rem_proj}). 

Suppose $\phi$ extends as a biholomorphism $\Phi\colon Z_{1}\to Z_{2}$. Then $a\circ\Phi\circ a$ is a biholomorphism such that
$a\circ\Phi\circ a|_{SM_{1}}=\phi$. By the identity principle for biholomorphisms (Proposition \ref{prop_identityprinciple}) we deduce that $a\circ\Phi\circ a=\Phi$ everywhere and thus $\Phi$ descends to a biholomorphism $\tilde{\Phi}\colon Z_{\mathbb{P},1}\to Z_{\mathbb{P},2}$. 
On the other hand, it is well known that $\phi$ always induces a biholomorphism $\Phi_{\mathbb{P}}$ between the projective twistor spaces simply by extending it in the obvious way on each fibre. By the identity principle again (this time applied to projective twistor space (cf.\,\cite{BMP24}) we see that $\tilde{\Phi}=\Phi_{\mathbb{P}}$ and thus $\tilde \Phi$ sends fibres into fibres. The same property follows for $\Phi$ and we are done.
\end{proof}

\subsection{Time changes} Next we consider maps $\phi=\varphi_\tau\colon SM\to SM$ of the form
\[
	\phi(x,v) = \varphi_{\tau(x,v)}(x,v),\quad (x,v)\in SM,
\]
where $(\varphi_t)$ is the geodesic flow  and $\tau\colon SM\to \R$ is a smooth map satisfying $X\tau + 1 >0$. If $\partial M\neq 0$ we additionally assume that $\tau =0$ on $\partial SM$. We refer to $\phi$ as a {\it time change}.

\begin{lemma} The map $\phi = \varphi_\tau$ is a diffeomorphism of $SM$ (with $\phi=\mathrm{Id}$ on $\partial SM$) such that $\phi_*(qX)=X$, where $1/q=1+X\tau$.
	\label{lemma:aux}
\end{lemma}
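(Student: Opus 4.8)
The plan is to exploit that $\phi$ moves each point only along its own geodesic orbit, so that both bijectivity and the computation of $\phi_*X$ reduce to one-variable calculus along orbits.

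First I would record the orbit-preserving structure: from the flow property, for fixed $(x,v)$,
\[
  \phi(\varphi_t(x,v)) = \varphi_{\tau(\varphi_t(x,v))}(\varphi_t(x,v)) = \varphi_{h(t)}(x,v),\qquad h(t):=t+\tau(\varphi_t(x,v)),
\]
and $h'(t)=1+(X\tau)(\varphi_t(x,v))>0$ by hypothesis. Thus $\phi$ acts along each orbit by the increasing reparametrisation $h$. Since $\tau$ is bounded (and vanishes on $\partial SM$ in the boundary case), $h$ maps the maximal time-interval of the orbit onto itself, so $\phi$ restricts to a bijection of each orbit; as the orbits partition $SM$, the map $\phi$ is a bijection of $SM$, and $\tau|_{\partial SM}=0$ gives $\phi=\mathrm{Id}$ there.

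Next I would upgrade this to a diffeomorphism via the implicit function theorem. Writing $q=\phi(x,v)$ one has $(x,v)=\varphi_{-\tau(x,v)}(q)$, so $t_0=\tau(x,v)$ is a zero of $F(t,q):=t-\tau(\varphi_{-t}(q))$; since $\partial_t F = 1+(X\tau)(\varphi_{-t}(q))>0$, the function $F(\cdot,q)$ is strictly increasing, so this zero is unique and, by the implicit function theorem, depends smoothly on $q$. (Existence of the zero for every $q$ follows from the intermediate value theorem: $F(\cdot,q)\to\pm\infty$ as $t\to\pm\infty$ along complete orbits because $\tau$ is bounded, while for an orbit segment hitting $\partial SM$ at times $t=-s_\pm$ one has $F(-s_+,q)=-s_+\le 0\le -s_-=F(-s_-,q)$ using $\tau|_{\partial SM}=0$.) Denoting the zero by $\sigma(q)$, the map $q\mapsto\varphi_{-\sigma(q)}(q)$ is smooth and is a two-sided inverse of $\phi$, so $\phi$ is a diffeomorphism of $SM$ fixing $\partial SM$ pointwise.

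Finally, the identity $\phi_*(qX)=X$ is obtained by differentiating the displayed relation $\phi(\varphi_t(x,v))=\varphi_{h(t)}(x,v)$ at $t=0$: the left side yields $d\phi_{(x,v)}(X(x,v))$, the right side yields $h'(0)\,X(\varphi_{h(0)}(x,v))=(1+X\tau)(x,v)\,X(\phi(x,v))$, whence $d\phi_{(x,v)}(q(x,v)X(x,v))=X(\phi(x,v))$ with $1/q=1+X\tau$. I do not anticipate a genuine obstacle; the only delicate point is the surjectivity-and-uniqueness bookkeeping in the implicit-function-theorem step, which must be phrased so as to handle closed surfaces and surfaces with (possibly trapping) boundary uniformly.
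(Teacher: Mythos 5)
Your computation of $\phi_*(qX)=X$ is essentially identical to the paper's: the paper applies the chain rule to $\phi(x,v)=\varphi_{\tau(x,v)}(x,v)$ along $X$, which is exactly the $t=0$ instance of your orbit-reparametrisation identity, and yields the same formula $1/q=1+X\tau$. The diffeomorphism property is dispatched in the paper as ``standard arguments that we omit,'' and your implicit-function-theorem construction of the inverse correctly supplies it; the only caveat is that the boundedness of $\tau$ you invoke for surjectivity is automatic only when $SM$ is compact (which covers every application of the lemma in the paper), and would need to be assumed separately in the complete non-compact boundaryless setting.
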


\begin{proof}  We compute
\begin{align*}
d\phi(X(x,v))&=d\varphi_{\tau(x,v)}(X(x,v))+d\tau(X(x,v))X(\phi(x,v))\\
&=X (\phi (x,v))+d\tau(X(x,v))X(\phi(x,v))\\
&=(1+d\tau(X(x,v)))X(\phi(x,v)),
\end{align*}
which gives the desired formula for $q$. That $\phi$ is indeed a diffeomorphism can be proved by standard arguments that we omit.
\end{proof}


\begin{proposition}\label{timerigidity}
Suppose the time change $\phi=\varphi_\tau\colon SM\to SM$ extends to an automorphism of transport twistor space. Assume either of the following:
\begin{enumerate}[label=\rm(\roman*)]
	\item $\partial M= \emptyset$ and $M$ is complete and free of conjugate points
	\item $\partial M \neq \emptyset$ and $M$ is compact (and $\tau=0$ on $\partial SM$)
\end{enumerate}
Then $\tau \equiv 0$ and thus $\phi\equiv \mathrm{Id}$.
\end{proposition}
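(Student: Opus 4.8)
The plan is to feed the constraint from Proposition~\ref{prop_hmaps} into an explicit computation of the vertical derivative of $\phi$. Suppose $\phi=\varphi_\tau$ extends to an automorphism $\Phi$ of $Z$; since biholomorphisms are orientation preserving, Proposition~\ref{prop_hmaps} applies to $\Phi$ and yields $\phi_*V\in \R X\oplus \R V$, i.e.\ $d\phi_{(x,v)}(V(x,v))$ has vanishing $H$-component (in the frame $\{X,H,V\}$ based at $\phi(x,v)$) for every $(x,v)\in SM$. Differentiating $\phi(x,v)=\varphi_{\tau(x,v)}(x,v)$ gives
\[
	d\phi_{(x,v)}(\xi)=d\varphi_{\tau(x,v)}(\xi)+\big(d\tau_{(x,v)}(\xi)\big)\,X(\phi(x,v)),\qquad \xi\in T_{(x,v)}SM,
\]
and specialising to $\xi=V(x,v)$ I would use the standard description of $d\varphi_t$ (the system \eqref{jac1}--\eqref{jac3} with $\lambda=0$): writing $d\varphi_t(V(x,v))$ in the moving frame, the $X$-coefficient is constantly $0$ (since $\dot x=0$ and $x(0)=0$), the $H$-coefficient is the scalar Jacobi field $J_{(x,v)}$ along $\gamma_{x,v}$ with $J_{(x,v)}(0)=0$ and $\dot J_{(x,v)}(0)=1$ (so $\ddot J_{(x,v)}+K_g\,J_{(x,v)}=0$), and the $V$-coefficient is $\dot J_{(x,v)}$. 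Hence the $H$-component of $d\phi(V(x,v))$ equals $J_{(x,v)}(\tau(x,v))$, and Proposition~\ref{prop_hmaps} forces
\[
	J_{(x,v)}(\tau(x,v))=0\qquad\text{for all }(x,v)\in SM.
\]

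Under hypothesis~(i), $M$ is complete and free of conjugate points, so a Jacobi field vanishing at $t=0$ has no other zero; in particular $J_{(x,v)}(t)\neq 0$ for every $t\neq 0$ and every $(x,v)$. The displayed identity then gives $\tau\equiv 0$ directly, hence $\phi=\varphi_0=\Id$.

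Under hypothesis~(ii) I would argue by connectedness, the point being that no global control on conjugate points is available. Extend $(M,g)$ to a slightly larger open surface $(N,g)$; since $SM$ is compact, the tube lemma provides $\varepsilon>0$ such that every $J_{(x,v)}$ with $(x,v)\in SM$ is defined on $[-\varepsilon,\varepsilon]$, and since $\dot J_{(x,v)}(0)=1$ depends continuously on $(x,v)$ we may shrink $\varepsilon$ so that $\dot J_{(x,v)}\geq \tfrac12$ on $SM\times[-\varepsilon,\varepsilon]$; consequently $J_{(x,v)}(t)\neq 0$ for $0<|t|\leq\varepsilon$, uniformly in $(x,v)\in SM$. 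The geodesic segment joining $(x,v)$ to $\phi(x,v)$ lies in $SM$, so $J_{(x,v)}(\tau(x,v))=0$ is the same vanishing statement computed in $N$. Now put $U=\{(x,v)\in SM:|\tau(x,v)|<\varepsilon\}$: it is open, it contains $\partial SM$ (where $\tau\equiv 0$, using $\partial M\neq\emptyset$), and on $U$ the identity $J_{(x,v)}(\tau(x,v))=0$ together with $|\tau(x,v)|<\varepsilon$ forces $\tau(x,v)=0$; thus $U=\{\tau=0\}$ is also closed. Since $SM$ is connected and $U\neq\emptyset$, we conclude $U=SM$, i.e.\ $\tau\equiv 0$ and $\phi=\varphi_0=\Id$. (If desired, the identity principle of Proposition~\ref{prop_identityprinciple} then also forces the extension $\Phi$ itself to be $\Id$ on all of $Z$.)

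The decisive step is the first one: converting ``$\phi$ extends holomorphically'' into the vanishing of a Jacobi field along every geodesic, via Proposition~\ref{prop_hmaps}; once this is in hand, case~(i) is immediate. The only genuinely delicate point is case~(ii), where without a no-conjugate-points hypothesis one cannot rule out $\tau(x,v)\neq 0$ pointwise; the extension of $(M,g)$ together with compactness of $SM$ is precisely what supplies a uniform zero-free neighbourhood of $\{t=0\}$ for the family of Jacobi fields, and this uniformity is what allows the boundary normalisation $\tau|_{\partial SM}=0$ to propagate across all of $SM$.
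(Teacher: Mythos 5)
Your proof is correct and follows essentially the same route as the paper: Proposition \ref{prop_hmaps} forces the $H$-component $J_{(x,v)}(\tau(x,v))$ of $d\phi(V(x,v))$ to vanish, yielding the dichotomy ``$\tau(x,v)=0$ or conjugate points'', which settles case (i) at once and is combined with a connectedness argument plus the boundary condition in case (ii). The only (harmless) difference is how the uniform gap in case (ii) is produced: you extend to a slightly larger open surface and use compactness/continuity of the Jacobi fields to get a zero-free interval, while the paper embeds $M$ isometrically into a closed surface and uses its injectivity radius.
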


\begin{proof}
	Assume first that $M$ is complete and without boundary. Analogous to the proof of Lemma \ref{lemma:aux} one computes 
	\[
		d\phi(V(x,v)) = d\varphi_{\tau(x,v)}(V(x,v)) + V\tau(x,v) X(\phi(x,v)).
	\]
	Using Proposition \ref{prop_hmaps} we obtain $\langle d\varphi_{\tau(x,v)}(V(x,v)),H(x,v)\rangle=0$ and thus there is a dichotomy: either $\tau(x,v)=0$ or the pair of points $x$ and $y = \pi \circ \varphi_{\tau(x,v)}(x,v)$ are conjugate to each other.  If $M$ is free of conjugate points, only the first option is available and thus $\tau =0$ everywhere.
	
	Next assume that $M$ is compact with non-empty boundary. We may embed this isometrically into a closed surface $N$. By the dichotomy above, $\tau(x,v) = 0$ or $\tau(x,v)\ge \mathrm{inj}(N)$ (the injectivity radius) for any $(x,v)\in SM$. Hence $SM=\{\tau <\mathrm{inj}(N)/2\}\cup \{\tau >\mathrm{inj}(N)/2\}$	 is an open cover and by connectedness only one of the inequalities occurs. Since $\tau|_{\partial SM} =0$, we have $\tau= 0$ everywhere.
\end{proof}

\section{Biholomorphism rigidity for simple surfaces}\label{section:mainthm}

In this section we prove Theorem \ref{mainthm}. We assume throughout that $g_1$ and $g_2$ are simple metrics on $M$ with $\mathrm{Vol}(\partial M,g_1)=\mathrm{Vol}(\partial M,g_2).$

 Since the boundary lengths match up, there is an isometry $(\partial M,g_1)\to (\partial M,g_2)$ and upon extending this to a diffeomorphism of $M$, we may assume that $g_1=g_2$ on $T(\partial M)$. Further putting the metrics into a {\it normal gauge} (cf.~\cite[Proposition 11.2.1]{PSU23}), it can be arranged that
 \[
 	g_1 = g_2 \text{ on } TM|_{\partial M}
 \]
 and we will make this assumption from now on. As a consequence, both the normal vector $\nu$ and the boundary $\partial SM$ are independent of the metric.

\subsection{Boundary action} 
By the following proposition, biholomorphisms are determined by their action on $\partial SM$.

\begin{proposition}
If $\Phi,\Phi'\colon Z_{1}\to Z_{2}$ are two biholomorphisms with $\Phi=\Phi'$ on $\partial SM$, then $\Phi=\Phi'$ everywhere.
\label{prop_identityprinciple2}
\end{proposition}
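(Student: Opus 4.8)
The plan is to reduce the statement to the identity principle already available in Proposition \ref{prop_identityprinciple}, which says that two holomorphic maps $Z_1\to Z_2$ agreeing on the full circle bundle $SM_1$ must agree everywhere. So the task is to upgrade agreement on $\partial SM_1$ to agreement on all of $SM_1$. Consider the restrictions $\phi=\Phi|_{SM_1}$ and $\phi'=\Phi'|_{SM_1}$, which by Proposition \ref{prop_hmaps}\ref{hmaps1} are diffeomorphisms $SM_1\to SM_2$ with $\phi_*X_1\in\R X_2$ and likewise for $\phi'$; in particular each induces an orbit equivalence between the geodesic flows. Since $g_1$ is simple, $SM_1$ is non-trapping and every point of $SM_1$ lies on a geodesic segment that starts and ends on $\partial SM_1$; the idea is to use this together with the hypothesis $\phi=\phi'$ on $\partial SM_1$ to propagate agreement inward along orbits.

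First I would make the orbit-equivalence structure explicit. Because $\phi_*(q X_1)=X_2$ for a positive function $q$ (this is the content of $\phi_*X_1\in\R X_2$, with positivity coming from $\Phi$ being orientation preserving and a diffeomorphism), $\phi$ maps $X_1$-orbits to $X_2$-orbits preserving the time orientation, and similarly $\phi'$. So there are smooth cocycles, i.e. time reparametrisations $t\mapsto \sigma(x,v,t)$ and $t\mapsto\sigma'(x,v,t)$, with $\phi(\varphi^1_t(x,v))=\varphi^2_{\sigma(x,v,t)}(\phi(x,v))$ and the analogous identity for $\phi'$. Now take an arbitrary $(x,v)\in SM_1$. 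By simplicity there is a boundary point $(x_0,v_0)\in\partial SM_1$ and a time $T\ge 0$ with $(x,v)=\varphi^1_T(x_0,v_0)$. Then
\[
\phi(x,v)=\phi(\varphi^1_T(x_0,v_0))=\varphi^2_{\sigma(x_0,v_0,T)}(\phi(x_0,v_0)),\qquad
\phi'(x,v)=\varphi^2_{\sigma'(x_0,v_0,T)}(\phi'(x_0,v_0)).
\]
Since $\phi(x_0,v_0)=\phi'(x_0,v_0)$, it remains to show $\sigma(x_0,v_0,T)=\sigma'(x_0,v_0,T)$. Here one uses the boundary hypothesis again: both orbit equivalences send the forward $\varphi^1$-orbit segment from $(x_0,v_0)$ to the exit point $\alpha_1(x_0,v_0)\in\partial SM_1$ onto the forward $\varphi^2$-orbit segment from $\phi(x_0,v_0)$ to its exit point, and $\phi(\alpha_1(x_0,v_0))=\phi'(\alpha_1(x_0,v_0))$ by hypothesis. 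Because $\phi|_{\partial SM_1}=\phi'|_{\partial SM_1}$ fixes exit points and both orbit equivalences are strictly time-orientation preserving diffeomorphisms between the same pair of compact orbit segments sharing common endpoints, the total transit times match; combined with a short argument that a strictly increasing reparametrisation of a fixed orbit interval with prescribed endpoints is determined by those endpoints at the level of the endpoint map, one gets $\phi(x,v)=\phi'(x,v)$.

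The cleanest way to organise the previous paragraph is probably to phrase it without cocycles: define $h=\phi'\circ\phi^{-1}\colon SM_2\to SM_2$. It is a diffeomorphism, it restricts to the identity on $\partial SM_2$ (since $\phi=\phi'$ there), and it maps each $X_2$-orbit to itself preserving orientation (since $\phi,\phi'$ have the same orbit-equivalence class — both come from comparing $g_1$- and $g_2$-geodesics through the same boundary points). So $h$ is a \emph{time change} of the $g_2$-geodesic flow fixing $\partial SM_2$: $h=\varphi^2_\tau$ for a smooth $\tau$ with $\tau|_{\partial SM_2}=0$ and $X_2\tau+1>0$. Moreover $h$ extends to the biholomorphism $\Phi'\circ\Phi^{-1}\colon Z_2\to Z_2$, i.e.\ an automorphism of $Z_2$. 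Now Proposition \ref{timerigidity}(ii) applies (with $M$ compact and $\tau=0$ on $\partial SM$) and yields $\tau\equiv 0$, hence $h=\mathrm{Id}$, i.e.\ $\phi=\phi'$ on all of $SM_1$. Finally, $\Phi$ and $\Phi'$ are then holomorphic maps $Z_1\to Z_2$ agreeing on $SM_1$, so Proposition \ref{prop_identityprinciple} gives $\Phi=\Phi'$ everywhere.

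The main obstacle is the claim that $h=\phi'\circ\phi^{-1}$ is genuinely a time change of the $g_2$-geodesic flow — i.e.\ that it preserves each $X_2$-orbit setwise, not merely that it sends orbits to orbits. A priori $h$ only satisfies $h_*X_2\in\R X_2$, so it permutes the orbit foliation; one must rule out that it moves one orbit to a different one. This is where simplicity and the boundary condition are essential: since $h|_{\partial SM_2}=\mathrm{Id}$ and every $X_2$-orbit meets $\partial SM_2$ (non-trapping) with well-defined entry point, and $h$ preserves the time orientation of the flow, the entry point of the orbit through $h(x,v)$ equals $h(\text{entry point of the orbit through }(x,v))=\text{entry point of the orbit through }(x,v)$, so $h$ maps each orbit to the orbit with the same entry point, which is itself. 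Once this is established, writing $h=\varphi^2_\tau$ with $\tau$ smooth and $X_2\tau+1>0$ follows by solving for the displacement time along orbits (smoothness up to $\partial SM_2$ using that exit/entry times depend smoothly on the point for a simple surface, by strict convexity of the boundary), and $\tau|_{\partial SM_2}=0$ is immediate from $h|_{\partial SM_2}=\mathrm{Id}$ together with strict convexity (a boundary point is only on its own orbit for time $0$ within $SM_2$). I would flag the smoothness of $\tau$ up to the boundary as the one technical point deserving a sentence of justification, citing the standard regularity of the scattering relation and travel-time function on simple surfaces.
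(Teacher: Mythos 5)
Your argument is correct and is essentially the paper's proof: the paper likewise composes the two maps into an automorphism ($(\Phi')^{-1}\circ\Phi$ rather than $\Phi'\circ\Phi^{-1}$), identifies its restriction to the unit sphere bundle as a time change $\varphi_\tau$ with $\tau=0$ on $\partial SM$, and concludes via Proposition \ref{timerigidity} together with the identity principle (Proposition \ref{prop_identityprinciple}). The one technical point you flag, smoothness of $\tau$ up to the boundary, is handled in the paper by observing that $X\tau=1/q-1$ with $\tau|_{\partial SM}=0$ and citing \cite[Theorem 5.3.6]{PSU23}, which is the right tool here (regularity of the travel-time function itself would not suffice, as it fails at the glancing region).
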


\begin{proof}
		Assume first that $g_1=g_2$ and drop subscripts. Consider $\Psi = (\Phi')^{-1}\circ \Phi \in \mathrm{Aut}(Z)$, which satisfies $\Psi=\Id$ on $\partial SM$. By Proposition \ref{prop_hmaps}, the restriction $\psi = \Psi|_{SM}$ satisfies $\psi_*(q X)=X$ for a smooth function $q\colon SM\to (0,\infty)$, which implies that $\psi(x,v) = \varphi_{\tau(x,v)}(x,v)$ for a function $\tau\colon SM\to \R$ with $\tau|_{\partial SM} = 0$. One readily checks that along each orbit of the geodesic flow $\tau$ is smooth and satisfies $X\tau = 1/q-1$, such that \cite[Theorem 5.3.6]{PSU23} implies  $\tau\in C^\infty(SM)$.
		
		Hence $\psi = \varphi_\tau$ is a time change and since it extends to an automorphism,  Proposition \ref{timerigidity} implies that $\psi=\Id$. By the identity principle (Proposition \ref{prop_identityprinciple}), we must have $\Psi=\Id$ and thus $\Phi = \Phi'$, as desired. 
		
		The proof carries over to distinct metrics, with $SM$ replaced by $SM_1$.
\end{proof}

The next result describes this boundary action. Recall that $\alpha_k\in \Diff(\partial SM)$  is the {\it scattering relation} of $g_k$  $(k=1,2)$.

\begin{proposition}
\label{boundaryaction} 
If $\Phi\colon Z_1\to Z_2$ is a biholomorphism, then $\phi = \Phi|_{\partial SM}\in \mathrm{Diff}(\partial SM)$ satisfies the following properties:
\begin{enumerate}[label=\rm(\roman*)]
	\item \label{boundaryaction1} $\alpha_2\circ \phi = \phi \circ \alpha_1$;
	\item \label{boundaryaction2}$\phi_*V = p V$ for some $p\in C^\infty(\partial SM)$;
	\item \label{boundaryaction3}if $u\in C^\infty(\partial SM)$ is fibrewise holomorphic and $\alpha_2$-invariant, then also $\phi^*u$ is fibrewise holomorphic (and by \ref{boundaryaction1} automatically $\alpha_1$-invariant).
\end{enumerate}
\end{proposition}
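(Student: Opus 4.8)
The plan is to read off all three statements from the structural facts in Proposition~\ref{prop_hmaps} --- namely that $\phi\colon SM_1\to SM_2$ is a diffeomorphism with $\phi_*X_1\in\R X_2$ and $\phi_*V\in\R X_2\oplus\R V_2$ --- together with the normal-gauge assumption in force in this section, under which $\partial SM$, its glancing part $\partial_0SM$, and the vertical field $V$ along $\partial SM$ (then common to $g_1$ and $g_2$) are all independent of the metric, plus the Pestov--Uhlmann extension of Proposition~\ref{prop_scathol}.

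\emph{Property \ref{boundaryaction1}.} Since the coefficient in $\phi_*X_1\in\R X_2$ is nowhere zero, $\phi$ maps the unparametrised orbits of the $g_1$-geodesic flow bijectively onto those of the $g_2$-geodesic flow, and as a diffeomorphism of manifolds with boundary it restricts to a diffeomorphism of $\partial SM$. The property ``$X$ is tangent to $\partial SM$'' is preserved, so $\phi(\partial_0SM)=\partial_0SM$; on the glancing region both scattering relations are the identity, and \ref{boundaryaction1} holds there immediately. For $(x,v)\notin\partial_0SM$, the maximal $g_1$-orbit through $(x,v)$ is a segment whose two endpoints lie on $\partial SM$ and are exchanged by $\alpha_1$; its $\phi$-image is a maximal $g_2$-orbit whose interior stays in $SM^\circ$, so its endpoints --- which are $\phi(x,v)$ and $\phi(\alpha_1(x,v))$ --- are exchanged by $\alpha_2$. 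This says precisely $\alpha_2\circ\phi=\phi\circ\alpha_1$.

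\emph{Property \ref{boundaryaction2}.} The vertical field $V$ is tangent to $\partial SM$ (the vertical rotation fixes the base point), so $\phi_*V$ is tangent to $\partial SM$ as well. Combined with $\phi_*V\in\R X_2\oplus\R V_2$, at a point of $\partial SM\setminus\partial_0SM$, where $X_2$ is transverse to $\partial SM$, we get $\phi_*V\in(\R X_2\oplus\R V_2)\cap T(\partial SM)=\R V_2$; hence the $X_2$-component of $\phi_*V$ vanishes off $\partial_0SM$, and since $\partial_0SM$ has empty interior in $\partial SM$ it vanishes everywhere by continuity. Writing $p=\langle\phi_*V,V_2\rangle$ for the Sasaki inner product gives a smooth $p$ with $\phi_*V=pV$ on $\partial SM$.

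\emph{Property \ref{boundaryaction3}.} Given a fibrewise holomorphic, $\alpha_2$-invariant $u\in C^\infty(\partial SM)$, Proposition~\ref{prop_scathol} yields $f\in\mathcal A(Z_2)$ with $f|_{\partial SM}=u$. Then $f\circ\Phi\in\mathcal A(Z_1)$, holomorphicity being preserved under composition; restricting to a single fibre $Z_{1,x}$, which is holomorphically embedded in $Z_1$, shows that the boundary trace $(f\circ\Phi)|_{\partial SM}=\phi^*u$ is fibrewise holomorphic, and its $\alpha_1$-invariance follows from \ref{boundaryaction1} and the $\alpha_2$-invariance of $u$. The only delicate points are the two appeals to the behaviour at the glancing region in \ref{boundaryaction1} and \ref{boundaryaction2}; both are handled cheaply, either directly (the glancing set maps to itself and is fixed by the scattering relation) or via the fact that $\partial_0SM$ is nowhere dense in $\partial SM$.
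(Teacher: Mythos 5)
Your proposal is correct and follows essentially the same route as the paper: property \ref{boundaryaction1} is read off from $\phi_*X_1\in\R X_2$ (the paper simply calls this ``immediate'', and your orbit-endpoint argument is the natural way to spell it out), property \ref{boundaryaction2} uses exactly the paper's transversality-plus-density argument on $\partial SM\setminus\partial_0SM$, and property \ref{boundaryaction3} is the paper's combination of $\Phi^*\colon\mathcal A(Z_2)\to\mathcal A(Z_1)$ with Proposition \ref{prop_scathol}. No gaps; the extra care you take at the glancing region is consistent with the paper's (terser) proof.
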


\begin{proof}
Property \ref{boundaryaction1} follows immediately from Proposition \ref{prop_hmaps}\ref{hmaps1}. This also yields $\phi_*(V) = q X + p V$ for smooth functions $p,q\in C^\infty(\partial SM)$ and since $X$ is transversal to $\partial SM\backslash \partial_0SM$ (which is a dense subset), we must have $q\equiv 0$ and obtain property \ref{boundaryaction2}. Since $\Phi^*\colon \mathcal A(Z_2)\to \mathcal A(Z_1)$, the third property is an immediate consequence of Proposition \ref{prop_scathol}.
\end{proof}

\subsection{Rigidity of the conformal class}

\begin{proposition}\label{rigidity1}
	 Suppose  $\phi\in \mathrm{Diff}(\partial SM)$ extends to a biholomorphism $Z_1\to Z_2$ (or, just satisfies properties \ref{boundaryaction1},\ref{boundaryaction2} and \ref{boundaryaction3} of Proposition \ref{boundaryaction}). Then there exists a biholomorphism $\varphi\colon (M,g_1)\to (M,g_2)$ such that
	\[
		\phi(x,\mu\cdot \nu_\perp(x)) = (\varphi(x), \mu\cdot \nu_\perp(\varphi(x)),\quad x\in \partial M,\mu\in \mathbb S^1,
	\]
	possibly after composing $\phi$ with the antipodal map. Moreover, writing $\varphi^*g_2=e^{2\sigma}g_1$ for some $\sigma\in C^\infty(M,\R)$, the function $e^\sigma - 1$ has zero average on $\partial M$. 
\end{proposition}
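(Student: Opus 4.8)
The plan is to describe the boundary map $\phi$ on $\partial SM$ fibre by fibre, to produce $\varphi$ from that description, and to settle the zero-average statement at the very end by a length count. Throughout one uses the normal gauge already in force: $g_1=g_2$ on $TM|_{\partial M}$, so that $\partial SM$, the inward normal $\nu$, $\nu_\perp$, the vertical field $V$ and the glancing region $\partial_0SM=S(\partial M)$ are all metric-independent, and each fibre $S_xM$ with $x\in\partial M$ is identified with $\mathbb{S}^1$ through the holomorphic parametrization $\mu\mapsto\mu\cdot\nu_\perp(x)$.

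First I would record the normal form of $\phi$. Because $\alpha_k$ has fixed point set exactly $\partial_0SM$, property \ref{boundaryaction1} forces $\phi$ and $\phi^{-1}$ to preserve $\partial_0SM$. Property \ref{boundaryaction2}, $\phi_*V=pV$ with $p$ nowhere vanishing (since $\phi$ is a diffeomorphism), says $\phi$ carries $V$-orbits to $V$-orbits, hence preserves the fibration $\partial SM\to\partial M$; so $\phi(x,v)=(\varphi(x),\psi_x(v))$ for some $\varphi\in\mathrm{Diff}(\partial M)$ and circle diffeomorphisms $\psi_x\in\mathrm{Diff}(\mathbb{S}^1)$ depending smoothly on $x$. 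Next I would use property \ref{boundaryaction3}: by Proposition \ref{prop_scathol} the fibrewise holomorphic $\alpha_2$-invariant functions on $\partial SM$ are exactly the restrictions $f|_{\partial SM}$ with $f\in\mathcal{A}(Z_2)$, and Pestov--Uhlmann-type surjectivity (Propositions \ref{puext1}, \ref{puext2} together with the mode-wise solvability \cite[Theorem 12.2.4]{PSU23}) provides enough such $f$ that the restrictions $f(\varphi(x),\cdot)|_{S_{\varphi(x)}M}$ are dense in $\mathbb{H}\cap C^\infty(\mathbb{S}^1)$ inside $L^2(\mathbb{S}^1)$. Since \ref{boundaryaction3} gives $w\circ\psi_x\in\mathbb{H}$ for every such $w$, and $\psi_x^*$ is bounded on $L^2(\mathbb{S}^1)$ with $\mathbb{H}$ closed, we obtain $\psi_x^*\mathbb{H}\subseteq\mathbb{H}$; hence $\psi_x$ extends holomorphically to $\mathbb{D}$, and by the Rad{\'o}--Kneser--Choquet theorem (Section~\ref{sec_RKC}) it is the restriction of a M\"obius automorphism $\Psi_x$ of $\mathbb{D}$.

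The heart of the matter is pinning $\Psi_x$ down, and this is where I expect the main difficulty. Over a point $x\in\partial M$ the two glancing vectors are $\pm\nu_\perp(x)$, i.e.\ $\mu=\pm1$; as $\phi$ preserves $\partial_0SM$ and is injective, $\psi_x$ permutes $\{1,-1\}$, and by continuity in $x$ and connectedness of $\partial M$ (here $M$ is a disk) this permutation is the same for all $x$. Composing $\phi$ with the antipodal map $a$ --- which commutes with $\alpha_2$ and the vertical flow and replaces $\psi_x$ by $-\psi_x$ --- if necessary, we may assume $\psi_x(\pm1)=\pm1$, which forces $\Psi_x(\mu)=(\mu-a_x)/(1-a_x\mu)$ with $a_x\in(-1,1)$ real and smooth in $x$. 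It then remains to show $a_x\equiv0$, equivalently that each $\psi_x$ is odd, after which Lemma \ref{circlediff} gives $\psi_x=\mathrm{Id}$. The inputs used so far cut $\psi_x$ down only to this one-parameter family --- for instance reality of $p$ at the glancing merely yields $p=\Psi_x'(1)=(1+a_x)/(1-a_x)$, with no further constraint. To kill $a_x$ I would extract finer information from the intertwining $\alpha_2=\phi\circ\alpha_1\circ\phi^{-1}$ at the glancing region: differentiating it twice in the vertical direction and evaluating on $\partial_0SM$, in the spirit of the jet computation in the proof of Lemma \ref{lemma:lambda}, should produce an identity that, combined with the known value of $p$ at the glancing, forces $a_x=0$.

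Once $\psi_x=\mathrm{Id}$, the asserted formula $\phi(x,\mu\cdot\nu_\perp(x))=(\varphi(x),\mu\cdot\nu_\perp(\varphi(x)))$ holds. To see $\varphi$ is a biholomorphism, take $h\in\mathcal{A}(\partial M,g_2)$ with holomorphic extension $H$ to $(M,g_2)$, extend $H$ to $f\in\mathcal{A}(Z_2)$ by Proposition \ref{puext1}, and observe that $\phi^*(f|_{\partial SM})$ is fibrewise holomorphic and, by \ref{boundaryaction1}, $\alpha_1$-invariant, hence equals $\tilde f|_{\partial SM}$ for a unique $\tilde f\in\mathcal{A}(Z_1)$ by Proposition \ref{prop_scathol}. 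Taking the zeroth fibrewise Fourier mode and using $\psi_x=\mathrm{Id}$ gives $\tilde f|_{\partial M}(x)=f(\varphi(x),0)=h(\varphi(x))$, so $\varphi^*\mathcal{A}(\partial M,g_2)\subseteq\mathcal{A}(\partial M,g_1)$, and Lemma \ref{rkcriemann} extends $\varphi$ to a biholomorphism $(M,g_1)\to(M,g_2)$ since $M$ is a disk. Finally, writing $\varphi^*g_2=e^{2\sigma}g_1$, the boundary restriction $\varphi|_{\partial M}$ pulls back the $g_2$-length element of $\partial M$ to $e^\sigma$ times the $g_1$-length element, whence $\int_{\partial M}e^\sigma\,d\ell_{g_1}=\mathrm{Vol}(\partial M,g_2)=\mathrm{Vol}(\partial M,g_1)=\int_{\partial M}d\ell_{g_1}$, i.e.\ $e^\sigma-1$ has zero average on $\partial M$.
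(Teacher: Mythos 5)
Most of your outline tracks the paper's proof: the normal form $\phi(x,v)=(\varphi(x),\psi_x(v))$ from \ref{boundaryaction1} and \ref{boundaryaction2}, the holomorphicity $\psi_x^*\mathbb H\subset\mathbb H$ from \ref{boundaryaction3} together with Pestov--Uhlmann-type density of boundary values (the paper's Lemma \ref{lem_pu}; note it only gives \emph{weak} density, so you need weak-weak continuity of $\psi_x^*$ and weak closedness of $\mathbb H$ rather than norm density, a minor repair), the extension of $\varphi$ via Lemma \ref{rkcriemann}, and the boundary-length computation for the zero average of $e^\sigma-1$.

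The genuine gap is exactly where you say you ``expect the main difficulty'': you reduce $\psi_x$ to the M\"obius family $\Psi_x(\mu)=(\mu-a_x)/(1-a_x\mu)$ but never prove $a_x=0$. Your proposal to differentiate the intertwining relation twice in the vertical direction at $\partial_0SM$, in the spirit of Lemma \ref{lemma:lambda}, is left entirely unexecuted: no identity is derived, and it is not at all clear that the $2$-jet of the scattering relations at the glancing region pins down $a_x$ (the first-order information there is already exhausted by the facts that $\phi$ preserves $\partial_0SM$ and conjugates the $\pm1$-eigenspaces of $d\alpha_k$, and the higher jets bring in second fundamental forms and curvature whose interplay you have not controlled). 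The paper closes this point by a short \emph{global} argument that does not look at jets at all: it proves $\psi_x$ is odd by applying $\alpha_2\circ\phi=\phi\circ\alpha_1$ to $(x,v)$ and $(x,-v)$, noting that the two $g_2$-geodesics issued from $\varphi(x)$ in the directions $\psi_x(v)$ and $\psi_x(-v)$ then have the same endpoints, and invoking simplicity of $g_2$ (uniqueness of geodesics between boundary points) plus injectivity of $\psi_x$ to conclude $\psi_x(-v)=-\psi_x(v)$; oddness, $\psi(1)=1$ and holomorphicity then give $\psi_x=\mathrm{Id}$ by Lemma \ref{circlediff}. So the missing step is the crux of the proposition, and the mechanism that actually closes it in the paper (simplicity used globally, not a glancing-jet computation) is absent from your argument.
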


\begin{remark}
	One can further show that $\sigma|_{\partial M} = \log(\pi_1/\varphi^*\pi_2)$, where $\pi_k(x) = \Pi_{k,x}(\nu_\perp(x),\nu_\perp(x))$ and $\Pi_{k}$ is the second fundamental form of $\partial M$.  If $\pi_1$ and $\pi_2$ are both constant, it then follows immediately that $\sigma|_{\partial M} = 0$. In Lemma \ref{lem_boundarydetsigma} below this condition on the second fundamental forms is removed.
\end{remark}

As preparation we need a lemma. Recall from Section \ref{sec_RKC} that $\mathbb H\subset L^2(\mathbb S^1)$ is the Hardy space and, for $x\in \partial M$, denote with $\iota_x\colon \mathbb S^1\rightarrow Z$ the map $\iota_x(\mu)= \mu \cdot \nu_\perp(x)$.

\begin{lemma}\label{lem_pu} If $(M,g)$ is simple and $x\in \partial M$, then the range of $\iota_x^*\colon \mathcal A(Z)\rightarrow \mathbb H$ is dense in the weak topology.
\end{lemma}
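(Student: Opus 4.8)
The plan is to prove the stronger and cleaner statement that for every integer $m\ge 0$ the linear ``jet'' map
\[
  \mathcal A(Z)\longrightarrow\C^{m+1},\qquad f\longmapsto\big(c_0(f),\dots,c_m(f)\big),
\]
is surjective, where $c_n(f)$ denotes the $n$-th Fourier coefficient of $\iota_x^*f\in C^\infty(\mathbb S^1)$. A weak neighbourhood of a point of $\mathbb H$ is, up to an arbitrarily small $L^2$-error, cut out by finitely many Fourier-coefficient constraints, so this yields the asserted weak density. Before that one records the elementary observation that $\iota_x^*f$ really lies in $\mathbb H$: writing $f(x,\cdot)$ as a power series in the complex fibre coordinate on $Z_x\cong\DD$ (normalised so that $\nu_\perp(x)$ has coordinate $1$), fibrewise holomorphicity of $f$ forbids negative powers, whence $\iota_x^*f=\sum_{n\ge0}c_n(f)\,\mu^n$ with $c_n(f)$ equal to the $n$-th Taylor coefficient of the holomorphic disc $f(x,\cdot)$ at its centre.

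The engine of the proof is Proposition~\ref{puext2} (Pestov--Uhlmann extension for holomorphic $1$-forms) combined with the fact that $\mathcal A(Z)$ is a $\C$-algebra --- if $df(\D)=dh(\D)=0$ then $d(fh)(\D)=0$ --- on which $\iota_x^*$ is an algebra homomorphism. First I would fix a holomorphic $1$-form $\theta$ on $M$ with $\theta_x(\nu_\perp(x))=1$; such a form exists, e.g.\ by normalising $d\chi$ for a Riemann map $\chi\colon(M,g)\to\DD$. By Proposition~\ref{puext2} there is $f\in\mathcal A(Z)$ with $f\circ a=-f$ whose linear part is $\theta$. Since $f\circ a=-f$, the function $\iota_x^*f$ is odd and hence has only odd Fourier modes, and since $\theta$ is of type $(1,0)$ its degree-one coefficient is $\theta_x(\nu_\perp(x))=1$; thus $c_0(f)=0$, $c_1(f)=1$ and $c_n(f)=0$ for every even $n$.

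Then, for each $i\ge0$, the power $f^i$ lies in $\mathcal A(Z)$ and $\iota_x^*(f^i)=(\iota_x^*f)^i=\mu^i+(\text{Fourier modes }>i)$, so $c_j(f^i)=0$ for $j<i$ while $c_i(f^i)=1$. Taking $f^0:=1$, the vectors $\big(c_0(f^i),\dots,c_m(f^i)\big)$ for $i=0,\dots,m$ are therefore the columns of a lower-triangular matrix with ones on the diagonal, hence a basis of $\C^{m+1}$; since $f^0,\dots,f^m\in\mathcal A(Z)$, the jet map above is onto, completing the argument. The one genuinely non-trivial step is the invocation of the Pestov--Uhlmann machinery to manufacture a \emph{global} holomorphic function $f\in\mathcal A(Z)$ with the prescribed degree-one behaviour along the fibre over the boundary point $x$; everything downstream is formal algebra. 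Upgrading this to $L^2$-density of the range, rather than mere weak density, would additionally require either a uniform bound on $\|\iota_x^*f\|$ in terms of the prescribed jet, or a Mergelyan-type argument producing a generator $\iota_x^*f$ whose holomorphic extension to $\DD$ is univalent, so that already the polynomials in $\iota_x^*f$ are dense in the disc algebra.
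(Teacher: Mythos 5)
Your construction is exactly the paper's: Proposition \ref{puext2} with the odd preimage produces $f\in\mathcal A(Z)$ whose fibre restriction is $\mu+A_3\mu^3+\cdots$, and the powers $f^i$ give a triangular family with which any finite jet can be realised; the paper does the same with the linear span of $\{1,A,A^2,\dots\}$, $A=\iota_x^*\hat a$. Up to that point your argument is fine and matches the source.

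The gap is in the final passage from jet surjectivity to weak density. A basic weak neighbourhood of $h$ is cut out by finitely many \emph{arbitrary} functionals $k\mapsto\langle k-h,\phi_j\rangle$, $\phi_j\in L^2(\mathbb S^1)$; replacing $\phi_j$ by a Fourier truncation costs an error of size $\|\phi_j-\phi_j^{\le m}\|\cdot\|r-h\|$, and the norm of your jet-matching element $r$ (a polynomial in $f$ with uncontrolled tail) is not bounded in terms of the prescribed jet, so matching finitely many Fourier coefficients does not place $r$ in the given weak neighbourhood. Indeed, jet surjectivity alone cannot imply the conclusion: the subspace $(1-2\mu)\cdot\C[\mu]\subset\mathbb H$ realises every finite jet (divide the jet by $1-2\mu$ as a power series and truncate), yet every element vanishes at $1/2$ after holomorphic extension, and since evaluation at $1/2$ is a bounded functional on $\mathbb H$, its closure is the proper subspace $\{h:\tilde h(1/2)=0\}$. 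Note also that by Mazur's theorem the weak closure of the range of $\iota_x^*$ (a linear subspace, hence convex) coincides with its norm closure, so ``mere weak density'' is not a softer target than $L^2$-density: the extra input you defer to your closing remark --- a norm bound on the approximants in terms of the jet, or univalence of the fibrewise extension of $\iota_x^*f$ so that polynomials in it are dense --- is already what is needed to finish the proof of the lemma as stated. (The paper itself is brief at this point, asserting the density of the span of powers is ``easy to check'', but your explicit reduction to finitely many Fourier-coefficient constraints is the step that fails as written.)
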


\begin{proof}
We construct an element $A= \iota_x^*(\mathcal A(Z)) \in \mathbb H$ such that
\[
	A(\mu) = \mu  + A_3 \mu^3 + \dots
\]
Let $a\colon M\to \C$ be holomorphic with $da_x(\nu_\perp(x))=1$. Then $da \in \mathcal H_1(M)$ and by Proposition \ref{puext2} there is function  $\hat a \in \mathcal A(Z)$ with $\hat a (x,\mu\cdot\nu_\perp(x)) = \mu + O(|\mu|^3)$. By the Cauchy integral formula,  $A=\iota_x^*\hat a$ has the desired Fourier modes.

It is easy to check that the linear span of $\{1,A,A^2,A^3,\dots\}\subset \mathbb H$ is weakly dense and from this the lemma follows.
\end{proof}

\begin{proof}[Proof of Proposition \ref{rigidity1}]
By  property \ref{boundaryaction2} the diffeomorphism $\phi$ is of the form $\phi(x,v)=(\varphi(x),\psi_x(v))$, where $\varphi\in \Diff(\partial M)$ and $\psi_x\in \Diff(S_xM ,S_{\varphi(x)}M)$. Due to \ref{boundaryaction1}, $\phi$ must map the glancing region $\partial_0SM$ into itself and thus  either
\[
	\psi_x(\nu_\perp(x)) = \nu_\perp(\varphi(x))\quad \text{ or } \quad \psi_x(\nu_\perp(x)) = - \nu_\perp(\varphi(x)),\qquad x\in \partial M,
\]
with a consistent choice of sign due to continuity. After applying the antipodal map if necessary, we may assume that we are in the first case.

\smallskip
\noindent {\it Step 1 (Oddness)} We claim that $\psi_x$ is odd in the sense that $\psi_x(-v)=-\psi_x(v)$ for all $(x,v)\in \partial SM$. To this end, put $(y,w)=\alpha_1(x,v)$ and  $u_\pm = \psi_x(\pm v)\in S_{\varphi (x)} M$.  We have to show that $u_+=-u_-$. Applying \ref{boundaryaction1} to $(x,\pm v)$ yields:
 \begin{equation*}
 \alpha_2(\varphi(x),u_\pm) = (\varphi(y),\psi_y(\pm w))\quad \Rightarrow \quad \pi\circ \alpha_2(\varphi(x),u_+)=\pi\circ \alpha_2(\varphi(x),u_-)
 \end{equation*}
Hence the $g_2$-geodesics through $(\varphi(x),u_+)$ and $(\varphi(x),u_-)$ have the same starting and end point and due to simplicity of $g_2$ they must agree, which enforces $u_+=u_-$ or $u_+=-u_-$. The first case is ruled out by the injectivity of $\psi_x$.

\smallskip
\noindent {\it Step 2 (Holomorphicity).} Let $x\in \partial M$ and define $\psi\in \Diff(\mathbb S^1)$ such that
\[
	\phi\circ \iota_x  = \iota_{\varphi(x)}\circ \psi\quad \text{ on } \mathbb S^1
\]
We claim that $\psi$ is holomorphic in the sense that $\psi^*\mathbb H\subset \mathbb H$. Suppose that $h\in \mathbb H$ is given as $h = f\circ \iota_{\varphi(x)}$ for some $f\in \mathcal A(Z_2)$, then due to \ref{boundaryaction3} we have $\psi^* h = \iota_x^* \phi^* f \in \mathbb H$. Now for general $h\in \mathbb H$ we can apply Lemma \ref{lem_pu} to obtain a sequence $h^{(N)}=f^{(N)}\circ \iota_{\varphi(x)}$ with $f^{(N)}\in \mathcal A(Z_2)$ and $h^{(N)}\rightharpoonup h$. By the preceding considerations, $\psi^* h^{(N)}\in \mathbb H$ and since the pull-back $\psi^*\colon  \mathbb H\rightarrow L^2(\mathbb S^1)$ is weak-weak 
continuous we must have $\psi^*h^{(N)}\rightharpoonup \psi^* h$ in $L^2(\mathbb S^1)$. Finally,  every norm-closed subspace of $L^2(\mathbb S^1)$ is also weakly closed and hence $\psi^*h\in \mathbb H$.

\smallskip
\noindent {\it Step 3 (Fibrewise rigidity).} We keep focusing on the diffeomorphism $\psi\in \Diff(\mathbb S^1)$ for a fixed $x\in \partial M$. By construction, $\psi_x(\mu \cdot \nu_\perp(x)) = \psi(\mu) \cdot \nu_\perp(\varphi(x))$ and $\psi(1)=1$. Further, oddness of $\psi_x$ implies oddness of $\psi$ and since we have just shown that $\psi$ is also holomorphic, we must have $\psi\equiv \Id$ by Lemma \ref{circlediff}.

\smallskip
\noindent {\it Step 4 (Extension).} It suffices to show that $\varphi$ satisfies the assumption of Lemma \ref{rkcriemann}. To this end, let $h\in \mathcal A(\partial M,g_2)$. We can first extend this $g_2$-holomorphically to $M$ and then by Proposition \ref{puext1} to a function $u\in \mathcal A(Z_2)$. 
Then $u|_{\partial SM}$ is $\alpha_2$-invariant and fibrewise holomorphic and property  \ref{boundaryaction3} ensures that also $\phi^*(u|_{\partial SM})\in C^\infty(\partial SM)$ is fibrewise holomorphic (and $\alpha_1$-invariant). Hence it can be extended to a function $w\in \mathcal A(Z_1)$. By construction, if $x\in \partial M,\mu\in \mathbb S^1$,
\[
	h(\varphi(x)) = u(\varphi(x),0) \quad \text{ and } \quad u(\varphi(x),\mu\cdot \nu_\perp(\varphi(x))) = w(x,\mu\cdot \nu_\perp(x))
\]
The second equation can be analytically continued to $|\mu|\le 1$ and plugging in $\mu=0$ yields $\varphi^*h = w(\cdot,0)|_{\partial M}\in \mathcal A(\partial M,g_1)$, as desired.

\smallskip
\noindent {\it Step 5 (Boundary behaviour).}
The boundary lengths of $g_1$ and $e^{2\sigma} g_1$ agree, and since the length element of the latter metric changes by a factor of $e^\sigma$ on $\partial M$, it follows that $1-e^\sigma$ has zero average with respect to $g_1$.
\end{proof}

\subsection{Rigidity within a conformal class}
The problem is now reduced to determining a conformal factor. Indeed, starting with two simple metrics $g_1$ and $g_2$ with $g_1= g_2$ on $TM|_{\partial M}$ and a biholomorphism  $\Phi\colon Z_1\to Z_2$ between their twistor spaces, Propositions \ref{boundaryaction} and \ref{rigidity1} guarantee the existence of an orientation preserving diffeomorphism $\varphi\in \Diff (M)$ and $\sigma\in C^\infty(\DD)$ such that (possibly after composing with the antipodal map),
\[
	\varphi^*g_2= e^{2\sigma} g_1\quad \text{ and } \quad \Phi|_{\partial SM} = \varphi_\sharp \circ \mathrm{sc}_\sigma|_{\partial SM}.
\]
Here $\mathrm{sc}_\sigma\colon SM\to SM_\sigma:=\{(x,w)\in TM: |w|_{e^{2\sigma}}=1\} $, $\mathrm{sc}_\sigma(x,v)=(x,e^{-\sigma(x)}v)$ is the rescaling map. Hence, writing $g=g_1$, the composition $\Psi:=\varphi_{\sharp}^{-1}\circ \Phi$ defines  a biholomorphism
\begin{equation}\label{conformalbiholo}
	\Psi\colon Z_g\to Z_{e^{2\sigma}g},\qquad \Psi|_{\partial SM} = \mathrm{sc}_\sigma|_{\partial SM}.
\end{equation}
Our task is to prove that $\sigma\equiv 0$ on $M$, in which case Proposition \ref{prop_identityprinciple2} implies that $\Psi \equiv \mathrm{Id}$ and the proof of Theorem \ref{mainthm} is complete. 

To proceed, we translate the task into a {\it scattering rigidity} problem. Indeed, \eqref{conformalbiholo} implies the following relation between scattering relations:
\begin{equation}\label{scatteringalmostequivalence}
	\alpha_{e^{2\sigma}g}\circ \mathrm{sc}_\sigma = \mathrm{sc}_\sigma
	\circ\alpha_g\quad \text{ on } \partial SM
\end{equation}
Assuming that $\sigma|_{\partial M} =0$, Remark 11.3.5 and Theorem 11.4.1 in \cite{PSU23} imply that $\sigma \equiv 0$, as desired. Given that $e^\sigma-1$ has zero average, it remains to establish the following boundary determination result.

\begin{lemma}\label{lem_boundarydetsigma}
Assume $(M,g)$ and $(M,e^{2\sigma}g)$ are both non-trapping with strictly convex boundary.  Then \eqref{scatteringalmostequivalence} implies that $\sigma|_{\partial M}$ is constant.
\end{lemma}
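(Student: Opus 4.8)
The plan is to recast \eqref{scatteringalmostequivalence} as an equality of two scattering relations on the \emph{same} unit tangent bundle $SM$, and then invoke the boundary determination Lemma \ref{lemma:lambda}. Write $\hat g = e^{2\sigma}g$. The key point is that, after conjugating by the rescaling map $\mathrm{sc}_\sigma\colon SM\to SM_\sigma$, the $\hat g$-geodesic flow becomes, up to a time change, the flow of a vector field $F = X + \lambda V$ on $SM$ with $\lambda$ of the special form $\lambda(x,v) = d\sigma_x(v^\perp)$. Granting this, since $(M,\hat g)$ is non-trapping and $M$ is compact, the flow of $F$ is non-trapping, and strict convexity of $\partial M$ for $\hat g$ is precisely the strict $F$-convexity condition $\Pi^\lambda>0$ on $\partial_0SM$ of Section \ref{seclambda}; hence $\alpha_\lambda$ is well defined. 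A time change does not change the scattering relation, so $\alpha_\lambda = \mathrm{sc}_\sigma^{-1}\circ \alpha_{\hat g}\circ \mathrm{sc}_\sigma$ on $\partial SM$, and \eqref{scatteringalmostequivalence} reads exactly $\alpha_\lambda = \alpha_g = \alpha_0$.

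To identify $\lambda$, I would reparametrise a $\hat g$-unit-speed geodesic $\gamma$ to $g$-unit speed, $\eta(s) = \gamma(t(s))$ with $t' = e^{\sigma\circ\eta}$, so that $\dot\eta = e^{\sigma\circ\eta}\dot\gamma$. Plugging $\nabla^{\hat g}_{\dot\gamma}\dot\gamma = 0$ into the conformal change of Levi-Civita connections recorded in the proof of Proposition \ref{prop_geodeq}, a short computation gives
\[
	\tfrac{D^g}{ds}\dot\eta \;=\; \mathrm{grad}_g\sigma \;-\; (d\sigma(\dot\eta))\,\dot\eta ,
\]
whose component along $\dot\eta^\perp$ equals $d\sigma(\dot\eta^\perp)$ (while its component along $\dot\eta$ vanishes, consistent with $\dot\eta$ being $g$-unit). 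Hence the lift $(\eta,\dot\eta)$ of $\eta$ to $SM$ is an integral curve of $F = X+\lambda V$ with $\lambda(x,v) = d\sigma_x(v^\perp)$ (possibly up to an overall sign, depending on the orientation conventions for $V$ and $v^\perp$). The verification that $\hat g$-convexity of $\partial M$ coincides with strict $F$-convexity is a direct computation comparing $\Pi^\lambda$ on $\partial_0SM$ with the conformal transformation law for the second fundamental form.

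It then remains to apply Lemma \ref{lemma:lambda}. Since $\lambda(x,v) = \theta_x(v)$ for the smooth $1$-form $\theta$ on $M$ given by $\theta_x(v) = d\sigma_x(v^\perp)$, the last assertion of that lemma yields $\theta|_{\partial M}\equiv 0$; as $v\mapsto v^\perp$ is a linear isomorphism of each tangent space, this forces $d\sigma_x = 0$ for every $x\in\partial M$. In particular the derivative of $\sigma$ tangent to $\partial M$ vanishes, so $\sigma|_{\partial M}$ is constant (here $\partial M$ is connected, as holds in the application, where $M$ is a disk). The only genuinely technical step is the identification of $F$ and of the precise coefficient $\lambda(x,v) = d\sigma_x(v^\perp)$, together with the matching of the two convexity notions: these are elementary manipulations with the conformal transformation laws, but the bookkeeping of signs and of the identification $SM\cong SM_\sigma$ has to be done with some care. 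Once that is in place, Lemma \ref{lemma:lambda} closes the argument at once.
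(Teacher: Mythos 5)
Your proposal is correct and follows essentially the same route as the paper: both recast \eqref{scatteringalmostequivalence} as the equality $\alpha_\lambda=\alpha_0$ for the thermostat field $F=X+\lambda V$ with $\lambda(x,v)=\pm\, d\sigma_x(v^\perp)$ (the paper writes $\lambda=-\star d\sigma$) and then conclude $d\sigma|_{\partial M}=0$ from Lemma \ref{lemma:lambda}. The only difference is that you derive the conjugation identity $(\mathrm{sc}_\sigma)_*F=e^{-\sigma}X_{e^{2\sigma}g}$ by a direct computation with the conformal change of connections, whereas the paper simply cites \cite[Lemma B.1]{CP22}.
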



\begin{proof}[Proof of Lemma \ref{lem_boundarydetsigma}]
We introduce a thermostat flow on $SM=SM_g$, generated by the vector field
\[
	F  = X + \lambda V,\quad \lambda(x,v) = -\star d\sigma_x(v).
\]
On account of \cite[Lemma B.1]{CP22}, we have
$
	(\mathrm{sc}_\sigma)_*(F) = e^{-\sigma} X_{e^{\sigma}g}.
$	 
and thus $\mathrm{sc}_\sigma^{-1}\circ \alpha_{e^{2\sigma}g}\circ \mathrm{sc}_\sigma$ equals the scattering relation $\alpha_\lambda$ of the thermostat $F$ (see Section \ref{seclambda}). 
By Lemma \ref{lemma:lambda} we must have $d\sigma|_{\partial M} =0$, as desired.
\end{proof}

\section{Biholomorphism rigidity for Anosov surfaces}\label{section:mainthm_anosov}

In this section we prove Theorem \ref{mainthm_anosov}. We start with a proposition that is probably well known to experts, but we include it for completeness.

\begin{proposition} Let $g_{1}$ and $g_{2}$ be two Anosov metrics on a closed surface $M$ with the same area and such that there exists
a smooth diffeomorphism $\phi:SM_{1}\to SM_{2}$ with $\phi_{*}(qX_{1})=X_{2}$ for a {\it positive} smooth function $q$. 
Then
\[1/q=1+X_{1}u\]
for some $u\in C^{\infty}(SM_{1})$.
\label{prop:aux}

\end{proposition}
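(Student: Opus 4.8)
The statement is a statement about the cohomological triviality of the reparametrisation function $1/q - 1$ for an orbit equivalence between two Anosov geodesic flows of equal area. The plan is to exploit the Livšic-type theory together with the fact that the orbit equivalence conjugates the flows after a time change. First I would introduce the time change explicitly: since $\phi_*(qX_1) = X_2$, the diffeomorphism $\phi$ intertwines the geodesic flow of $g_1$, reparametrised by $q$, with the geodesic flow of $g_2$. Equivalently, $\phi$ conjugates the flow of $X_2$ pulled back to $SM_1$ — call it $Y$ — with a positive multiple of $X_1$, and the function $1/q$ measures the infinitesimal ratio of speeds. The classical way to detect whether such a ratio is a coboundary over the flow of $X_1$ is via periodic orbits: by the Livšic theorem, $1/q - 1 = X_1 u$ for a smooth $u$ if and only if $\int_\gamma (1/q - 1)\, dt = 0$ for every closed geodesic $\gamma$ of $g_1$; and $\int_\gamma (1/q)\, dt$ is precisely the period of the corresponding closed orbit of $Y$, i.e. the length $\ell_{g_2}(\phi(\gamma))$ of the image closed geodesic of $g_2$. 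So the required vanishing is exactly the statement that $g_1$ and $g_2$ have the same marked length spectrum.

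The key steps, in order: (1) Show $\phi$ maps closed $g_1$-geodesics to closed $g_2$-geodesics, inducing a bijection on free homotopy classes that is the identity (since $\phi$ is homotopic to the identity — it is a diffeomorphism of $SM$ covering the right thing, or one argues via the action on $\pi_1$). (2) Identify $\int_{\gamma}(1/q)\,dt$ with the $g_2$-length of $\phi(\gamma)$; combined with step (1) this gives $\int_\gamma (1/q - 1)\,dt = \ell_{g_2}(\phi(\gamma)) - \ell_{g_1}(\gamma)$, so the hypothesis needed is that these lengths agree over all classes. (3) Invoke marked length spectrum rigidity for Anosov surfaces \cite{GLP23}: two Anosov metrics with the same marked length spectrum are isometric — but we don't yet know the lengths agree. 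The correct logical route is the converse: one does \emph{not} assume MLS equality; instead, one uses the \emph{equal area} hypothesis together with a variational/averaging argument. Concretely, integrate $1/q - 1$ against the Liouville (or Bowen--Margulis) measure of $X_1$; the integral of $1/q$ over the normalised $g_1$-Liouville measure relates to $\mathrm{Vol}(M,g_2)/\mathrm{Vol}(M,g_1) = 1$ by a change-of-variables computation (the Jacobian of $\phi$ against Liouville measures is governed by $q$), giving $\int_{SM_1}(1/q - 1)\, d\mathrm{Liouville} = 0$. Then apply the Livšic regularity theorem in the form: a function over an Anosov flow whose Liouville integral vanishes and which is a coboundary in $L^2$ is a smooth coboundary; to upgrade "Liouville integral zero" to "coboundary", one uses that the orbit equivalence forces all periodic integrals to have a common sign or to be controlled — here is where I expect to lean on the structure theory from \cite{GLP23} and \cite{BMP24}, i.e. the fact that an orbit equivalence homotopic to the identity between Anosov flows with matching invariant measure data must in fact preserve periods.

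I expect the main obstacle to be precisely step (3): passing from the \emph{measure-theoretic} consequence of equal areas (an integral identity over Liouville measure) to the \emph{pointwise cohomological} statement $1/q - 1 = X_1 u$, which a priori requires controlling \emph{all} periodic orbit integrals, not just one average. The resolution should be that the orbit equivalence $\phi$, being homotopic to the identity, sends each closed $g_1$-geodesic to the closed $g_2$-geodesic in the same homotopy class, so that the periodic data of $1/q$ over $X_1$ is the marked length spectrum of $g_2$; one then shows the two marked length spectra must coincide because their difference is a function with vanishing Liouville average whose periodic integrals are moreover constrained to be cohomologous to a constant (using that $X_2 \tau_2 = 0$-type solenoidal decompositions, cf.\ the Anosov analogue of the normal gauge), forcing the constant to be zero. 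Once the periodic integrals all vanish, the smooth Livšic theorem for transitive Anosov flows delivers $u \in C^\infty(SM_1)$ with $X_1 u = 1/q - 1$, completing the proof.
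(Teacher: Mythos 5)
Your reduction to a Liv\v{s}ic-type coboundary problem is a reasonable framing, but the proposal has a genuine gap exactly where you flag it, and the patch you sketch does not close it. First, the averaging step is unsubstantiated: an orbit equivalence $\phi$ has no reason to push the $g_1$-Liouville measure to the $g_2$-Liouville measure, and its volume Jacobian is \emph{not} governed by $q$ alone (only the flow direction is controlled by $q$; the transverse distortion is arbitrary a priori), so ``equal areas $\Rightarrow \int_{SM_1}(1/q-1)\,d\mathrm{Liouville}=0$'' does not follow from a change of variables. Second, even granted a vanishing Liouville average, that is a single linear condition, whereas Liv\v{s}ic requires \emph{all} periodic integrals of $1/q-1$ to vanish; your suggested resolution (``periodic integrals constrained to be cohomologous to a constant\dots forcing the constant to be zero'') is precisely the assertion that needs proof, and the appeal to $\phi$ being homotopic to the identity is not available either, since no such hypothesis is made. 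Invoking marked length spectrum rigidity from \cite{GLP23} is circular here: MLS equality is what one would have to establish, not what one may assume.

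The paper closes this gap by a different mechanism that you are missing. By transitivity of the Anosov flow, $\phi^*d\boldsymbol{\alpha}_2=c\,d\boldsymbol{\alpha}_1$ for a \emph{constant} $c$, hence $\phi^*\boldsymbol{\alpha}_2=c\,\boldsymbol{\alpha}_1+\omega$ with $\omega$ a closed $1$-form; contracting with $qX_1$ gives $1/q=c+\omega(X_1)$, so the cocycle is a priori of the special form ``constant plus closed $1$-form'', reducing the problem to the finitely many parameters $c$ and $[\omega]\in H^1(SM_1,\R)$. Equal areas enters through $\phi^*(\boldsymbol{\alpha}_2\wedge d\boldsymbol{\alpha}_2)=c^2\,\boldsymbol{\alpha}_1\wedge d\boldsymbol{\alpha}_1+c\,\omega\wedge d\boldsymbol{\alpha}_1$ and Stokes, giving $c^2=1$; integrating $1=q(c+\omega(X_1))$ against a zero-homology invariant measure gives $c>0$, hence $c=1$. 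Finally $[\omega]=0$ is not obtained from Liv\v{s}ic at all but from thermodynamic formalism: the measure of maximal entropy has zero winding cycle, so the Sharp minimiser of the pressure function $\beta_{X_1}$ is $0$, and the time-preserving conjugacy between $X_1/(1+\omega(X_1))$ and $X_2$ transfers this to force $[\omega]=0$ (using that the geodesic flow is homologically full). Then $\omega=du$ yields $1/q=1+X_1u$ directly. Without the contact-form identity (or some substitute producing the closed form $\omega$) and an argument killing its cohomology class, your outline does not yield the proposition.
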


\begin{proof}

Let $\boldsymbol{\alpha}_{i}$ denote the canonical contact form of $SM_{i}$ for $i=1,2$.
We know that $\phi$ is a smooth orbit equivalence between geodesic flows and by transitivity there is a constant $c$ such that
$\phi^*d\boldsymbol{\alpha}_{2}=c\,d\boldsymbol{\alpha}_{1}.$
Hence, there exists a closed 1-form $\omega$ on $SM_{1}$ such that
\begin{equation}
\phi^*\boldsymbol{\alpha}_{2}=c\,\boldsymbol{\alpha}_{1}+\omega.
\label{eq:eqc}
\end{equation}
Next we observe that 
\[\phi^*(\boldsymbol{\alpha}_{2}\wedge d\boldsymbol{\alpha}_2)=c^2\,\boldsymbol{\alpha}_{1}\wedge d\boldsymbol{\alpha}_{1}+c\,\omega\wedge d\boldsymbol{\alpha}_{1}.\]
Since $\omega$ is closed, the form $\omega\wedge d\boldsymbol{\alpha}_1$ is exact, hence Stokes' theorem gives
\[\int_{SM_{1}}\phi^*(\boldsymbol{\alpha}_{2}\wedge d\boldsymbol{\alpha}_2)=c^2\int_{SM_{1}}\boldsymbol{\alpha}_{1}\wedge d\boldsymbol{\alpha}_1.\]
Since $\phi$ is a diffeomorphism and the surfaces have the same area, this implies $c^{2}=1$. We claim that $c=1$.
Contracting 
\eqref{eq:eqc} with $qX_{1}$ gives
\[1=q(c+\omega(X_{1}))\]
and integrating over $SM_{1}$ against any invariant probability measure $\mu$ with zero homology (of which there are many, like the Liouville measure, see \cite[Section 2.6]{Pat99}) one gets
\[c=\int_{SM_{1}}q^{-1}d\mu>0.\]
It follows that 
$1=q(1+\omega(X_{1}))$.
To complete the proof of the proposition we will show that $\omega$ is exact and writing $\omega=du$ we obtain the desired result.

To kill the cohomology class $[\omega]$ we will use the well known fact that the measure of maximal entropy of the geodesic flow has zero homology/winding cycle (which is simply because the antipodal map conjugates the geodesic flow $\varphi_{t}^{1}$ with $\varphi_{-t}^{1}$, cf. \cite[Section 2.6]{Pat99}). Consider the pressure function in cohomology $\beta_{X_{1}}:H^{1}(SM_{1},\mathbb{R})\to\mathbb{R}$, given by
\[\beta_{X_{1}}([\omega])=\sup_{\mu}\left\{h_{\mu}+\int_{SM_{1}}\omega(X_{1})d\mu\right\},\]
where $\mu$ runs over all invariant Borel probability measures and $h_{\mu}$ denotes the metric entropy of $\mu$.
Since the measure of maximal entropy has zero winding cycle, we see that $\beta_{X_{1}}([\omega])\geq h_{\text{top}}(X_{1})=\beta_{X_{1}}(0)$.
The pressure function is strictly convex, therefore it has a unique minimiser $\xi_{X_{1}}$ called the {\it Sharp minimiser}, see \cite{Sh93}. Since $\beta_{X_{1}}(0)$ is the absolute minimum, the Sharp minimiser for the geodesic flow must be zero.

Finally, since there is a time preserving conjugacy between $Z:=X_{1}/(1+\omega(X_{1}))$ and $X_{2}$, the Sharp minimiser $\xi_{Z}$ of $Z$ is also zero and hence $\omega$ must be exact. Indeed,
\cite[Proposition 4.4]{GR-H_24} (the geodesic flow is homologically full) gives
\[\xi_{Z}=\xi_{X_{1}}+P_{X_{1}}(\xi_{X_{1}}(X_{1}))[\omega]\]
and $P_{X_{1}}(0)=h_{\text{top}}(X_{1})\neq 0$ (here $P_{X_{1}}(f)$ is the pressure of the potential $f$).
\end{proof}

\begin{proof}[Proof of Theorem \ref{mainthm_anosov}]

    By Proposition \ref{prop_hmaps}, the biholomorphism $\Phi$ induces a diffeomorphism $\phi\colon SM_{1}\to SM_{2}$ such that $\phi_{*}(qX_{1})=X_{2}$ for some smooth, non-vanishing function $q\colon SM_1 \to \mathbb{R}$, 
    and after correcting with the antipodal map if necessary, we may assume that $q$ is positive on $SM_1$. 


    Proposition \ref{prop:aux} and Lemma \ref{lemma:aux} imply that $X_{1}$ and $X_{2}$ are smoothly conjugate via the map $\phi\circ(\varphi^{1}_{u})^{-1}$. Now we can apply \cite[Corollary 1.3]{GLP23} to obtain an isometry $F\colon (M,g_{1})\to (M,g_{2})$ such that $\phi=F_{\sharp}\circ \varphi^{1}_{\tau}$, where $\tau \in C^{\infty}(SM_{1})$. But then Proposition \ref{timerigidity} allows us to conclude that $\tau =0$ and by the identity principle (Proposition \ref{prop_identityprinciple}), $\Phi=F_{\sharp}$ in all $Z_{1}$, as desired.
\end{proof}

\bibliographystyle{plain}
\bibliography{Bihol_paper.bbl}

\end{document}